\begin{document}
\allowdisplaybreaks
\newtheorem{theorem}{Theorem}[subsection]
\newtheorem{theorem1}{Theorem}[section]
\newtheorem{proposition}{Proposition}[section]
\newtheorem{lemma}{Lemma}[section]
\numberwithin{equation}{section}
\theoremstyle{plain}
\newtheorem{Definition}{Definition}[section]
\newtheorem{Proposition}{Proposition}[section]
\newtheorem{Property}{Property}[section]
\newtheorem{Theorem}{Theorem}[section]
\newtheorem{Lemma}[Theorem]{\hspace{0em}\bf{Lemma}}
\newtheorem{Corollary}[Theorem]{Corollary}
\newtheorem{Remark}{Remark}[section]
\newtheorem{Example}{Example}[section]

\setlength{\oddsidemargin}{ 1cm}  
\setlength{\evensidemargin}{\oddsidemargin}
\setlength{\textwidth}{13.50cm}
\vspace{-.8cm}

\noindent{\Large Rigidity of proper holomorphic mappings between generalized Fock-Bargmann-Hartogs domains}\\\\

\noindent\text{Enchao Bi$^{1}$$^{*}$ \; \&  \; Zhenhan Tu$^{2}$}\\

\noindent\small {${}^1$School of Mathematics and Statistics, Qingdao
University, Qingdao, Shandong 266071, P.R. China} \\
\noindent\small {${}^2$School of Mathematics and Statistics, Wuhan
University, Wuhan, Hubei 430072, P.R. China} \\

\noindent\text{Email: bienchao@whu.edu.cn (E. Bi),\;
zhhtu.math@whu.edu.cn (Z. Tu)}
\renewcommand{\thefootnote}{{}}
\footnote{\hskip -16pt {$^{*}$Corresponding author. \\ } }
\\

\normalsize \noindent\textbf{Abstract}\quad  A generalized Fock-Bargmann-Hartogs domain
$D_n^{\mathbf{m},\mathbf{p}}$ is defined as a domain fibered over
$\mathbb{C}^{n}$ with the fiber over $z\in \mathbb{C}^{n}$ being a generalized complex ellipsoid $\Sigma_z({\mathbf{m},\mathbf{p}})$. In general, a generalized Fock-Bargmann-Hartogs domain is an unbounded non-hyperbolic domains without smooth boundary. The main contribution of this paper is
as follows. By using the explicit formula of Bergman kernels of the generalized Fock-Bargmann-Hartogs domains, we obtain the rigidity results of proper holomorphic mappings between two equidimensional  generalized
Fock-Bargmann-Hartogs domains. We therefore exhibit an example of unbounded weakly pseudoconvex domains on which the rigidity results of proper holomorphic mappings can be built.\\

\noindent \textbf{Key words:} Automorphism groups,  Bergman kernels, Generalized Fock-Bargmann-Hartogs domains, Proper holomorphic mappings\\

\noindent \textbf{Mathematics Subject Classification (2010):} 32A07\textperiodcentered\, 32A25\textperiodcentered\, 32H35\textperiodcentered\, 32M05 \\

\setlength{\oddsidemargin}{-.5cm}  
\setlength{\evensidemargin}{\oddsidemargin}
\pagenumbering{arabic}
\renewcommand{\theequation}
{\arabic{section}.\arabic{equation}}

\section{Introduction}
A holomorphic map $F:\Omega_{1}\rightarrow \Omega_{2}$  between two domains $\Omega_{1},\; \Omega_{2}$ in $ \mathbb{C}^{n}$  is said to be proper if $F^{-1}(K)$ is compact in $\Omega_{1}$ for every compact subset $K\subset \Omega_{2}$. In particular,  an  automorphism $F:\Omega\rightarrow \Omega$ of a domain $\Omega$ in $ \mathbb{C}^{n}$  is a proper holomorphic  mapping of $\Omega$ into  $\Omega$.
There are many works  about proper holomorphic  mappings between various bounded domains with some requirements of the boundary (e.g., Bedford-Bell \cite{Bedf}, Diederich-Fornaess \cite{Diederich},
 Dini-Primicerio \cite{Dini} and  Tu-Wang \cite{T-W.Math Ann}). However, very little seems to be known about proper holomorphic mapping between the unbounded weakly pseudoconvex domains. There are also
some works about automorphism groups of hyperbolic domains (e.g., Isaev \cite{Isa}, Isaev-Krantz  \cite{IK}  and Kim-Verdiani  \cite{KV} ). In this paper, we mainly focus our attention on some unbounded non-hyperbolic weakly pseudoconvex domains.

The Fock-Bargmann-Hartogs domain $D_{n,m}(\mu)$ is defined by
$$D_{n,m}(\mu)=\{(z,w)\in \mathbb{C}^{n}\times \mathbb{C}^{m}: \left\lVert w \right\rVert^{2} <
e^{-\mu{\left\lVert z\right\rVert}^{2}}\} \;\; \mbox{for} \;\;  \mu>0,$$ where
$\|\cdot\|$ is the standard Hermitian norm. The
Fock-Bargmann-Hartogs domains $D_{n,m}(\mu)$ are strongly
pseudoconvex domains in $\mathbb{C}^{n+m}$ with smooth real-analytic boundary.  We note that each
$D_{n,m}(\mu)$ contains $\{(z, 0)\in
\mathbb{C}^n\times\mathbb{C}^m\} \cong \mathbb{C}^n$. Thus each
$D_{n,m}(\mu)$ is not hyperbolic in the sense of Kobayashi and
$D_{n,m}(\mu)$ can not be biholomorphic to any bounded domain in
$\mathbb{C}^{n+m}$. Therefore, each Fock-Bargmann-Hartogs domain
$D_{n,m}(\mu)$ is an unbounded non-hyperbolic domain in
$\mathbb{C}^{n+m}.$

In 2013, Yamamori \cite{Y} gave an explicit formula for the
Bergman kernels of the Fock-Bargmann-Hartogs domains in terms of the
polylogarithm functions. In 2014, by checking that the Bergman
kernel ensures revised the Cartan's theorem, Kim-Ninh-Yamamori
\cite{Kim} determined the automorphism group of the
Fock-Bargmann-Hartogs domains as follows.

\begin{Theorem} [Kim-Ninh-Yamamori \cite{Kim}] {\it The
automorphism group ${\rm Aut}(D_{n,m}(\mu))$ is exactly the group
generated by all automorphisms of  $D_{n,m}(\mu)$ as follows:
\begin{equation*}
\begin{array}{l}
\varphi_{U}:(z,w)\longmapsto(Uz,w),\quad U\in \mathcal{U}(n); \\\\
\varphi_{U^{'}}:(z,w)\longmapsto(z,U^{'}w),\quad U^{'}\in \mathcal{U}(m);\\\\
\varphi_{v}:(z,w)\longmapsto(z+v,e^{-\mu \langle z,v
\rangle-\frac{\mu}{2} {\left\lVert v\right\rVert}^{2}}w),\quad (v\in
\mathbb{C}^{n}),
\end{array}
\end{equation*}
where $\mathcal{U}(k)$ is the unitary group of degree $k,$ and
$\langle \cdot,\cdot \rangle$ is the standard Hermitian inner
product on $\mathbb{C}^{n}$.}
\end{Theorem}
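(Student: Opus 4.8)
The plan is to derive the theorem from the explicit Bergman kernel of $D_{n,m}(\mu)$ together with a Cartan-type uniqueness statement valid for this unbounded domain. Let $G$ denote the group generated by the maps $\varphi_U$, $\varphi_{U'}$, $\varphi_v$. First one checks by direct substitution that each of these preserves $D_{n,m}(\mu)$; the only nonroutine point is that $\varphi_v$ leaves invariant the quantity $t(z,w):=\|w\|^2e^{\mu\|z\|^2}$, an elementary computation. Thus $G\subseteq{\rm Aut}(D_{n,m}(\mu))$, and since $\varphi_v(0,0)=(v,0)$ the subgroup $\{\varphi_v\}$ already acts transitively on the ``center'' $Z_0:=\mathbb C^n\times\{0\}=\{t=0\}$. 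It remains to prove ${\rm Aut}(D_{n,m}(\mu))\subseteq G$.

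Next I would compute $K:=K_{D_{n,m}(\mu)}$. Expanding a holomorphic $L^2$ function in the Hartogs variable $w$ and summing the resulting series of integrals against weighted Fock spaces over $\mathbb C^n$ gives Yamamori's closed formula \cite{Y} in terms of polylogarithms; what matters here is that $K$ is positive on the diagonal and that there $K((z,w);(\bar z,\bar w))=e^{m\mu\|z\|^2}\phi(t)$ for a real-analytic $\phi>0$ on $[0,1)$ with $\phi(0)>0$ and $\phi'(0)>0$. Since $K$ is nonvanishing, $\log K$ is a local K\"ahler potential for the Bergman metric $g_B$, a biholomorphic invariant; using $t=\|w\|^2+O(\|z\|^2\|w\|^2)$ near the origin one finds $g_B(0,0)=\operatorname{diag}(m\mu\,I_n,\,c'\,I_m)$ with $c'=\phi'(0)/\phi(0)>0$.

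Now let $F\in{\rm Aut}(D_{n,m}(\mu))$. The key geometric observation is that $Z_0$ is biholomorphically distinguished: it is precisely the union of the images of all nonconstant holomorphic maps $\mathbb C\to D_{n,m}(\mu)$. Indeed, if $g=(g_1,g_2)$ is such a map then $\|g_2\|^2<e^{-\mu\|g_1\|^2}\le1$, so $g_2$ is bounded, hence constant by Liouville; were that constant nonzero, $g_1$ would be bounded and $g$ constant, so $g_2\equiv0$. Consequently $F(Z_0)=Z_0$, and after composing $F$ with a suitable $\varphi_v\in G$ we may assume $F(0,0)=(0,0)$. Then $A:=JF(0,0)$ preserves $T_{(0,0)}Z_0$, so it is block upper triangular, and $A$ preserves $g_B(0,0)=\operatorname{diag}(m\mu I_n,c'I_m)$; writing these two conditions in $2\times2$ block form forces $A=\operatorname{diag}(U,U')$ with $U\in\mathcal U(n)$, $U'\in\mathcal U(m)$. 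Composing $F$ further with $\varphi_{U^{-1}}$ and $\varphi_{U'^{-1}}$ reduces us to $\widetilde F\in{\rm Aut}(D_{n,m}(\mu))$ with $\widetilde F(0,0)=(0,0)$ and $J\widetilde F(0,0)=I_{n+m}$.

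The last step --- the main obstacle, since $D_{n,m}(\mu)$ is unbounded and the classical Cartan uniqueness theorem is therefore unavailable --- is to conclude $\widetilde F={\rm id}$. Here one invokes the ``revised Cartan theorem'' that positivity of the Bergman kernel ensures: because $K(\cdot,\overline{(0,0)})$ is nonzero near the origin, the Bergman representative map $b_{(0,0)}$ is defined and is a local biholomorphism at the origin, and the transformation law for $b_{(0,0)}$ under automorphisms --- a consequence of the Bergman kernel transformation formula --- shows that $b_{(0,0)}\circ\widetilde F$ equals $b_{(0,0)}$ composed with the linear map $J\widetilde F(0,0)=I$, whence $b_{(0,0)}\circ\widetilde F=b_{(0,0)}$; by local injectivity of $b_{(0,0)}$ this gives $\widetilde F={\rm id}$ near the origin, and then on all of $D_{n,m}(\mu)$ by analytic continuation. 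Unwinding the reductions gives $F=\varphi_v\circ\varphi_{U'}\circ\varphi_U\in G$, so ${\rm Aut}(D_{n,m}(\mu))=G$. I expect the delicate parts to be the bookkeeping in the polylogarithmic expansion of $K$ and, above all, verifying the hypotheses of the revised Cartan theorem for this unbounded, non-hyperbolic domain.
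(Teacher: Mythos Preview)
The paper does not give its own proof of this statement; Theorem~1.1 is quoted from Kim--Ninh--Yamamori as background. The paper does, however, prove a strict generalization (Theorem~1.6, via Theorem~1.4 and Corollary~1.5), so it makes sense to compare your argument against that.

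Your proof is correct. The Liouville argument showing $F(Z_0)=Z_0$ is essentially the paper's Lemma~3.2 (there stated for biholomorphisms between two domains). The difference comes after the reduction to $F(0,0)=(0,0)$. The paper invokes the Ishi--Kai form of Cartan's theorem to conclude immediately that $F$ is \emph{linear}, and then analyzes the resulting matrix block by block: the $B$-block vanishes by $F(Z_0)=Z_0$, the $D$-block is identified via a lemma of Tu--Wang on linear isomorphisms of generalized pseudoellipsoids, the $C$-block is killed by a boundary argument, and the $A$-block is shown unitary by another boundary comparison. You instead first read off the Jacobian structure from invariance of the Bergman metric $g_B(0,0)=\mathrm{diag}(m\mu I_n,\,c'I_m)$, reduce further to $J\widetilde F(0,0)=I$, and then apply the Cartan \emph{uniqueness} form (via representative coordinates) to conclude $\widetilde F=\mathrm{id}$. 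Both routes rest on exactly the same hypotheses $K(0,0)>0$ and $T_D(0,0)>0$ on the Bergman kernel. Your Jacobian-first approach is cleaner for $D_{n,m}(\mu)$ because the metric at the origin is simply two scalar blocks; the paper's linearity-first route scales more readily to the generalized domains $D_{n_0}^{\mathbf n,\mathbf p}(\mu)$, where the fiber is a genuine pseudoellipsoid and one needs the Tu--Wang lemma rather than a metric computation to pin down the block structure.
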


Recently, Tu-Wang \cite{T.W} has established the rigidity of the proper holomorphic mappings between two equidimensional Fock-Bargmann-Hartogs domains as follows.

\begin{Theorem}[Tu-Wang \cite{T.W}]
{If $D_{n,m}(\mu)$ and $D_{n',m'}(\mu')$ are two equidimensional
Fock-Bargmann-Hartogs domains with $m\geq 2$ and $f$ is a proper
holomorphic mapping of $D_{n,m}(\mu)$ into $D_{n',m'}(\mu')$, then
$f$ is a biholomorphism  between $D_{n,m}(\mu)$ and
$D_{n',m'}(\mu')$.}
\end{Theorem}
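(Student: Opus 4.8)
The plan is to adapt the proof of Alexander's theorem --- the case $n=n'=0$, where $D_{n,m}(\mu)$ is the unit ball $\mathbb B^m$ --- to these unbounded strongly pseudoconvex domains. Write $D=D_{n,m}(\mu)$, $D'=D_{n',m'}(\mu')$ and $f=(g,h)$ with $g\colon D\to\mathbb C^{n'}$ and $h\colon D\to\mathbb C^{m'}$; note the defining inequality of $D'$ already forces $\|h\|<1$ on $D$. The first and principal step is to show that $f$ extends holomorphically to a neighbourhood of $\overline D$ and maps $\partial D$ into $\partial D'$. Since $\partial D=\{\|w\|^2=e^{-\mu\|z\|^2}\}$ is a smooth real-analytic strongly pseudoconvex hypersurface, the boundary-regularity theory for proper holomorphic mappings (Bell--Catlin, Diederich--Fornaess), combined with the reflection principle of Pinchuk for real-analytic strongly pseudoconvex hypersurfaces, gives such an extension \emph{locally} near each boundary point. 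To globalise this in the unbounded setting I would use two features of $D$: the translation subgroup $\{\varphi_v:v\in\mathbb C^n\}\subset\operatorname{Aut}(D)$ acts transitively on the base $\mathbb C^n$, so any boundary point may be moved to one lying over $z=0$ --- near which a bounded piece of $\overline D$ is biholomorphic to a bounded strongly pseudoconvex domain --- and Yamamori's explicit Bergman kernel of $D_{n,m}(\mu)$, via the transformation inequality $|\det f'(q)|^2K_{D'}(f(q),f(q))\le k\,K_D(q,q)$ for a proper map of multiplicity $k$, supplies the a priori control needed to keep $f$ from escaping to infinity near a finite boundary point.

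Granting the extension, fix $p\in\partial D$ and strictly plurisubharmonic defining functions $\rho,\rho'$ of $D,D'$ near their boundaries. Because $f(D)\subset D'$ and $f(\partial D)\subset\partial D'$, the plurisubharmonic function $\rho'\circ f$ is negative on $D$ and vanishes on $\partial D$ near $p$, so the Hopf lemma gives that its normal derivative at $p$ does not vanish; dividing, $\rho'\circ f=h_0\,\rho$ with $h_0$ smooth and $h_0(p)>0$ (positivity because $\rho$ and $\rho'\circ f$ are both negative on $D$). Applying $\partial\bar\partial$ to this identity and restricting to the complex tangent space $T^{1,0}_p(\partial D)$, where the cross terms drop out, yields $\operatorname{Levi}_{f(p)}(\rho')(df_p\xi)=h_0(p)\,\operatorname{Levi}_p(\rho)(\xi)$, which is strictly positive for $\xi\ne0$ by strong pseudoconvexity of $D$; hence $df_p$ is injective on $T^{1,0}_p(\partial D)$. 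Applying instead $\partial(\rho'\circ f)=h_0\,\partial\rho$ at $p$ to a hypothetical nonzero vector in $\ker df_p$ forces that vector to lie in $T^{1,0}_p(\partial D)$, which contradicts the injectivity just established; therefore $\det f'(p)\ne0$ for every $p\in\partial D$. As $\det f'$ extends continuously to $\overline D$, is $\not\equiv0$ (a proper map between equidimensional domains is open, hence generically locally biholomorphic), and has no zero on $\partial D$, its zero set $V_f=\{\det f'=0\}\subset D$ --- the branch locus, possibly empty --- has no limit points on $\partial D$, so $V_f$ is a closed analytic subset of all of $\mathbb C^{n+m}$.

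This is where $m\ge2$ enters. If $V_f\ne\varnothing$ it is a pure $(n+m-1)$-dimensional analytic subset of $\mathbb C^{n+m}$ lying inside $D$. For each $a\in\mathbb C^n$ the slice $V_f\cap(\{a\}\times\mathbb C^m)$ is closed in $\mathbb C^m$ and contained in the bounded ball $\{w:\|w\|^2<e^{-\mu\|a\|^2}\}$, hence is a compact analytic subset of $\mathbb C^m$ and so is \emph{finite}; thus the projection $(z,w)\mapsto z$ is finite-to-one on $V_f$, whence $\dim V_f\le n$, contradicting $\dim V_f=n+m-1\ge n+1$. So $V_f=\varnothing$. (For $m=1$ this breaks down: $\mathbb C^n\times\{0\}$ is then a closed-in-$\mathbb C^{n+1}$ hypersurface inside $D$, leaving room for the Blaschke-product type branched proper self-maps --- this is exactly why $m\ge2$ is needed.) With $V_f=\varnothing$, $f\colon D\to D'$ is proper with nowhere-vanishing Jacobian, hence a finite-sheeted covering map; and since both $D$ and $D'$ deformation-retract onto $\mathbb C^n$, resp.\ $\mathbb C^{n'}$, via $(z,w,t)\mapsto(z,tw)$, they are simply connected, so the covering has a single sheet. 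Hence $f$ is a biholomorphism of $D$ onto $D'$.

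The genuine difficulty is the first step. The classical boundary-regularity machinery for proper holomorphic maps is built for \emph{bounded} strongly pseudoconvex domains, whereas $D$ and $D'$ are unbounded and non-hyperbolic, and nothing obvious prevents $f$ from running off to infinity along a sequence converging to a finite boundary point; ruling this out is precisely where the explicit formula for the Bergman kernel of $D_{n,m}(\mu)$ and the large automorphism group provided by the Kim--Ninh--Yamamori theorem do the essential work. Once the extension up to the boundary is in hand, the remaining steps are Alexander's argument together with the elementary geometry of the fibration $D\to\mathbb C^n$.
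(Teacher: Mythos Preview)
Your argument is correct, and its architecture---holomorphic extension to the closure via the Bergman-kernel transformation law, exclusion of the branch locus, then simple connectivity---matches the paper's. The paper does not prove this particular statement (it is quoted from \cite{T.W}), but it proves the generalization Theorem~\ref{Thm1.2}, whose proof specializes here since for $D_{n,m}(\mu)$ the entire boundary is the smooth strongly pseudoconvex piece $b_0$. In that specialization the paper's appeal to Pin\v{c}uk's Lemma~1.3 is exactly your Hopf-lemma/Levi-form computation, packaged as a citation. The genuine divergence is in how the \emph{interior} branch locus $V_f$ is eliminated. You slice: each fibre $V_f\cap(\{a\}\times\mathbb C^m)$ is a compact analytic subset of $\mathbb C^m$, hence finite, so $\dim V_f\le n<n+m-1$ once $m\ge2$. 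The paper instead shows via Chirka's criterion that $V_f$ is algebraic, takes its projective closure in $\mathbb P^{n+m}$, and extracts the same inequality from the intersection with the hyperplane at infinity. Your route is shorter and more elementary, and in fact would work equally well for the generalized domains $D_{n_0}^{\mathbf n,\mathbf p}(\mu)$, whose fibres are still bounded; the algebraic-geometric detour buys no extra generality at this step. What the paper's framework \emph{does} buy is the boundary stratification of Proposition~\ref{pro.2.4.1}, needed because your Levi-form argument requires strong pseudoconvexity everywhere---true for $D_{n,m}(\mu)$ but not for the generalized domains, whose boundaries have weakly pseudoconvex and non-smooth strata.
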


A generalized complex ellipsoid  (also called generalized
pseudoellipsoid) is a domain of the form
\begin{eqnarray*}
\begin{aligned}
\Sigma(\mathbf{n};\mathbf{p})&=\{(\zeta_1,\cdots,\zeta_r)\in
\mathbb{C}^{n_1}\times\cdots\times\mathbb{C}^{n_r}:
\sum_{k=1}^{r}\|\zeta_k\|^{2p_k}<1 \},
\end{aligned}
\end{eqnarray*}
where $\mathbf{n}=(n_1,\cdots,n_r)\in \mathbb{N}^{r}$ and
$\mathbf{p}=(p_1,\cdots,p_r)\in (\mathbb{R}_+)^r.$ In the special
case where all the $p_k=1,$ the generalized complex ellipsoid
$\Sigma(\mathbf{n};\mathbf{p})$ reduces to the unit ball in
$\mathbb{C}^{n_1+\cdots+n_r}.$ Also, it is known that a generalized
complex ellipsoid $\Sigma(\mathbf{n};\mathbf{p})$ is homogeneous if
and only if $p_k=1$ for all $1\leq k\leq r$ (cf. Kodama \cite{K1}).
In general, a generalized complex ellipsoid is not strongly
pseudoconvex and its boundary is not smooth. The automorphism group ${\rm Aut}(\Sigma(\mathbf{n};\mathbf{p}))$ of
$\Sigma(\mathbf{n};\mathbf{p})$ has been studied by Dini-Primicerio
\cite{Dini}, Kodama \cite{K1} and Kodama-Krantz-Ma
\cite{Kodama-Krantz-Ma}.

In 2013, Kodama \cite{K1} obtained the result as follows.

\begin{Theorem} [Kodama \cite{K1}]\label{Thm K} {\it $(i)$
If $1$ does not appear in $p_1,\cdots,p_r$, then any automorphism
$\varphi\in {\rm Aut}(\Sigma(\mathbf{n};\mathbf{p}))$ is of the form
\begin{equation}\label{eq1.1}
\varphi(\zeta_1,\cdots,\zeta_r)=(\gamma_1(\zeta_{\sigma(1)}),\cdots,\gamma_r(\zeta_{\sigma(r)}))
\end{equation}
where $\sigma\in S_r$ is a permutation of the $r$ numbers
$\{1,\cdots,r\}$ such that $n_{\sigma(i)}=n_i, p_{\sigma(i)}=p_i$,
$1\leq i\leq r$ and $\gamma_1,\cdots, \gamma_r$ are unitary
transformation of $\mathbb{C}^{n_1}(n_{\sigma(1)}=n_1),\cdots,
\mathbb{C}^{n_r}(n_{\sigma(r)}=n_r)$ respectively.

$(ii)$ If $1$ appears in $p_1,\cdots,p_r$, we can assume, without
loss of generality, that $p_1=1, p_2\neq 1,\cdots,p_r\neq 1$, then
${\rm Aut}(\Sigma(\mathbf{n};\mathbf{p}))$ is generated by elements
of the form \eqref{eq1.1} and automorphisms of the form
\begin{equation}
\varphi_a(\zeta_1,\zeta_2,\cdots,\zeta_r)=(T_{a}(\zeta_1),\zeta_2(\psi_a(\zeta_1))^{1/2p_2},\cdots,\zeta_r(\psi_a(\zeta_1))^{1/2p_r})
\end{equation}
where $T_a$ is an automorphism of the ball $\mathbb{B}^{n_1}$ in
$\mathbb{C}^{n_1}$, which sends a point $a\in \mathbb{B}^{n_1}$ to
the origin and
$$\psi_a(\zeta_1)=\frac{1-\|a\|^2}{(1-\langle
\zeta_1,a\rangle)^2}.$$}
\end{Theorem}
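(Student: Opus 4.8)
The plan is to combine H.\ Cartan's classical structure theory for bounded circular domains with an explicit analysis of the complete holomorphic vector fields on $\Sigma:=\Sigma(\mathbf n;\mathbf p)$. First observe that $\Sigma$ is a bounded circular domain containing the origin; indeed it is invariant under $(\zeta_1,\dots,\zeta_r)\mapsto(U_1\zeta_1,\dots,U_r\zeta_r)$ for arbitrary $U_k\in\mathcal U(n_k)$. Hence $G:=\mathrm{Aut}(\Sigma)$ is a real Lie group and every $\varphi\in G$ with $\varphi(0)=0$ is $\mathbb C$-linear. The argument then splits into: (a) determining the isotropy group $G_0=\{\varphi\in G:\varphi(0)=0\}$; and (b) determining the orbit $G\cdot 0$; these two pieces are then assembled.

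\emph{Step (a).} If $A\in GL(n_1+\cdots+n_r,\mathbb C)$ preserves $\Sigma$, then the two functions $\sum_k\|(A\zeta)_k\|^{2p_k}$ and $\sum_k\|\zeta_k\|^{2p_k}$ have the same unit sublevel set. Expanding each into bihomogeneous components --- the $k$-th summand on the right has bidegree $(p_k,p_k)$ in $(\zeta,\bar\zeta)$ --- and using invariance under the block unitaries, one deduces that $A$ must carry each block isometrically onto a block with the same pair $(n_k,p_k)$, i.e.\ $A$ is of the form \eqref{eq1.1}, the sole exception being that blocks with exponent $1$ may be intertwined by an arbitrary unitary of their direct sum. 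Under the normalization of the theorem there is at most one block with exponent $1$ (none in case (i); only $\zeta_1$ in case (ii)), so in both cases $G_0$ consists exactly of the maps \eqref{eq1.1}; in particular $G_0$ contains the block-diagonal compact group $\mathcal U(n_1)\times\cdots\times\mathcal U(n_r)$, whose only fixed point in $\Sigma$ is $0$.

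\emph{Step (b).} Let $\mathfrak g=\mathfrak{aut}(\Sigma)$ be the Lie algebra of $G$, i.e.\ the complete holomorphic vector fields on $\Sigma$. The circle $R_\theta:\zeta\mapsto e^{i\theta}\zeta$ lies in $G$, and by the classical structure of bounded circular domains (H.\ Cartan) the $\mathrm{Ad}(R_\theta)$-action decomposes $\mathfrak g=\mathfrak g_0\oplus\mathfrak m$, where $\mathfrak g_0$ is the (linear) isotropy algebra at $0$ and $\mathfrak m$ is built only from weight-$\pm1$ pieces; moreover the evaluation $X\mapsto X(0)$ carries $\mathfrak m$ isomorphically onto $T_0(G\cdot0)$. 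A field in $\mathfrak m$ has the shape $X_a=\sum_{k,j}\big(a_{kj}-Q_{kj}(\zeta,\bar\zeta)\big)\partial_{\zeta_{kj}}$, with $Q_{kj}$ quadratic in $\zeta$ and conjugate-linear in the parameter $a=(a_{kj})$, and completeness amounts to the tangency identity $X_a\rho+\overline{X_a\rho}=\lambda\,\rho$ on $\{\rho=0\}$, where $\rho=\sum_k\|\zeta_k\|^{2p_k}-1$. Since $\partial_{\zeta_{kj}}\rho=p_k\|\zeta_k\|^{2(p_k-1)}\bar\zeta_{kj}$, the quantity $X_a\rho$ acquires, for every block $k$ with $a_k\ne0$, a term carrying the factor $\|\zeta_k\|^{2(p_k-1)}$: when $p_k\ne1$ this is a power of $\|\zeta_k\|^2$ that cannot be absorbed into $\lambda\rho$ by any adjustment of the remaining blocks, because it is not a polynomial in $1$ and the $\|\zeta_l\|^{2p_l}$; hence $a_k=0$. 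When $p_k=1$ the factor is trivial, and --- compensating the other blocks by the scalings $(\psi_a)^{1/2p_l}$, exactly as in Kodama's $\varphi_a$ --- the identity is solvable. Therefore $\mathfrak m=0$ in case (i) and $\mathfrak m\cong\mathbb C^{n_1}$ in case (ii), the latter swept out by the infinitesimal generators of $\{\varphi_a:a\in\mathbb B^{n_1}\}$.

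\emph{Conclusion and main obstacle.} In case (i), $\mathfrak m=0$ makes $G\cdot 0$ discrete, so $\mathcal U(n_1)\times\cdots\times\mathcal U(n_r)$ is open in $G$ and fixes $0$; since $g^{-1}\big(\mathcal U(n_1)\times\cdots\times\mathcal U(n_r)\big)g$ again fixes $0$ for every $g\in G$ while that group has no fixed point other than $0$, it follows that $g(0)=0$, hence $g$ is linear and of the form \eqref{eq1.1} by Step (a). In case (ii), the maps $\varphi_a$ already realize the full orbit $G\cdot 0=\mathbb B^{n_1}\times\{0\}$, so for arbitrary $\varphi\in G$ with $\varphi(0)=(b,0,\dots,0)$ the composite $\varphi_b^{-1}\circ\varphi$ fixes $0$ and is of the form \eqref{eq1.1}; thus $G$ is generated by the maps \eqref{eq1.1} together with the $\varphi_a$. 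I expect the main obstacle to be the computation in Step (b) that no weight-$\pm1$ vector field can move a block whose exponent is $\ne1$: this is precisely where the hypothesis on the $p_k$ enters, and it requires promoting the tangency condition to a genuine identity in $(\zeta,\bar\zeta)$ and comparing the distinct powers of the $\|\zeta_k\|^2$ appearing on the two sides. A secondary technical point is that $\partial\Sigma$ is in general neither smooth nor strongly pseudoconvex, so one should argue throughout with the interior vector-field picture rather than with boundary regularity of $\varphi$; alternatively one may stratify $\partial\Sigma$ by Levi rank and by non-smoothness --- the exceptional stratum being $\bigcup_{k:\,p_k\ne1}\{\zeta_k=0\}\cap\partial\Sigma$ --- and verify that it is $G$-invariant, reaching the same conclusion at the cost of more bookkeeping.
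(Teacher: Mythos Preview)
The paper does not contain a proof of this theorem: it is quoted as a background result from Kodama~\cite{K1} (and the closely related Lemma~\ref{Thm1.3} from Tu--Wang~\cite{T-W.Math Ann} handles only the linear isotropy part). There is therefore nothing in the paper to compare your proposal against.

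That said, your outline is the standard route to this result and is essentially what Kodama (building on earlier work of Naruki, Sunada, and Dini--Primicerio) carries out: Cartan's linearity theorem reduces the isotropy at $0$ to a linear problem, the linear isotropy is pinned down by matching the bihomogeneous pieces of the defining function, and the orbit $G\cdot 0$ is read off from the weight-$\pm1$ part $\mathfrak m$ of $\mathfrak{aut}(\Sigma)$. Two small points to tighten. First, the tangency condition for a complete holomorphic vector field on a bounded domain with real-analytic boundary is $\mathrm{Re}(X\rho)=0$ on $\{\rho=0\}$, hence $\mathrm{Re}(X\rho)=h\,\rho$ with $h$ a real-analytic \emph{function}, not a constant~$\lambda$; your comparison of powers of $\|\zeta_k\|^2$ must allow for this. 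Second, when $p_k<1$ the factor $\|\zeta_k\|^{2(p_k-1)}$ is singular along $\{\zeta_k=0\}$ and $\partial\Sigma$ is not smooth there, so the ``cannot be absorbed'' argument should be run on the open stratum where all $\zeta_k\neq 0$ and then extended; you already flag this in your closing paragraph, so the plan is sound.
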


In this paper, we define the generalized Fock-Bargmann-Hartogs
domains $D_{n_{0}}^{\mathbf{n},\mathbf{p}}(\mu)$ as follows:
$$D_{n_{0}}^{\mathbf{n},\mathbf{p}}(\mu)=\{(z,w_{(1)},\cdots,w_{(l)})\in \mathbb{C}^{n_{0}}\times \mathbb{C}^{n_{1}}\times \cdots\times\mathbb{C}^{n_{l}}
  :\sum\limits_{j=1}^{l}{\left\lVert w_{(j)}\right\rVert}^{2p_{j}}<e^{-\mu{\left\lVert z\right\rVert}^{2}}\}\quad (\mu>0),$$
where $\mathbf{p}=(p_{1},\cdots,p_{l})\in(\mathbb{R}_{+})^{l},\;
\mathbf{n}=(n_{1},\cdots,n_{l}), \;
w_{(j)}=(w_{j1},\cdots,w_{jn_{j}})\in\mathbb{C}^{n_{j}},$ in which
$n_{j}$ is a positive integer for $1\leq j\leq l.$ Here and
henceforth, with no loss of generality, we always assume that $p_{i}\neq 1$
($2\leq i\leq l$) for $D_{n_{0}}^{\mathbf{n},\mathbf{p}}(\mu)$.

Obviously, each generalized
Fock-Bargmann-Hartogs domain $D_{n_{0}}^{\mathbf{n},\mathbf{p}}$ is an
unbounded non-hyperbolic domain. In general, a generalized
Fock-Bargmann-Hartogs domain is not a strongly pseudoconvex domain and its
boundary is not smooth.

In this paper, we prove the following results.

\begin{Theorem}\label{Thm1}Suppose $D_{n_{0}}^{\mathbf{n},\mathbf{p}}(\mu)$ and $D_{m_{0}}^{\mathbf{m},\mathbf{q}}(\nu)$ are two equidimensional generalized Fock-Bargmann-Hartogs domains.
Let $f:\; D_{n_{0}}^{\mathbf{n},\mathbf{p}}(\mu)\rightarrow D_{m_{0}}^{\mathbf{m},\mathbf{q}}(\nu)$
be a biholomorphic mapping. Then there exists $\phi\in\mathrm{Aut}(D_{m_{0}}^{\mathbf{m},\mathbf{q}}(\nu))$ such that
\begin{equation}\label{eq3}
\phi\circ f(z,w)=(z,w_{(\sigma(1))},\cdots,w_{(\sigma(l))})
\left(\begin{array}{ccccc}
A&\null&\null&\null&\null\\
\null&\Gamma_{1}&\null&\null&\null\\
\null&\null&\Gamma_{2}&\null&\null\\
\null&\null&\null&\ddots&\null\\
\null&\null&\null&\null&\Gamma_{l}\\
\end{array}\right),
\end{equation}
where
$\sigma \in S_{l}$ is a permutation such that $n_{\sigma(j)}=m_{j}$, $p_{\sigma(j)}=q_{j}\; (1\leq j\leq l)$,
 $\sqrt{\frac{\nu}{\mu}}A\in \mathcal{U}(n)$  $(n:=n_{0}=m_{0})$, and $\Gamma_{i}\in \mathcal{U}(m_{i})\; (1\leq i\leq l)$.
\end{Theorem}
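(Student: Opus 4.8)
The plan is to follow the now-standard strategy for rigidity of proper holomorphic mappings between Hartogs-type domains: transfer information from the Bergman kernel, which is a biholomorphic invariant up to the Jacobian factor, and then exploit the explicit form of the kernel computed earlier for $D_{n_0}^{\mathbf{n},\mathbf{p}}(\mu)$. Concretely, if $K_1$ and $K_2$ denote the Bergman kernels of the source and target, then for a biholomorphism $f$ one has the transformation rule $K_1(\xi,\bar\xi)=|\det f'(\xi)|^2\, K_2(f(\xi),\overline{f(\xi)})$. Since both kernels depend only on the ``norm-type'' quantities $t:=e^{-\mu\|z\|^2}-\sum_j\|w_{(j)}\|^{2p_j}$ (and, through the polylog-type expansion, on the individual $\|w_{(j)}\|^{2p_j}$ and on $\|z\|^2$), I would first extract from this identity that $|\det f'|^2$ is a function of these invariants alone, and in particular is bounded on the slice $w=0$, i.e. on the copy of $\mathbb{C}^{n_0}$ sitting inside the domain.

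The second step is to understand the behaviour of $f$ on this central complex-Euclidean slice $E_1=\{w=0\}\cong\mathbb{C}^{n_0}$. The key geometric fact is that $E_1$ is exactly the set of points of $D_{n_0}^{\mathbf{n},\mathbf{p}}(\mu)$ through which there is a complex line (indeed a whole affine $\mathbb{C}^{n_0}$) contained in the domain, i.e. the ``non-hyperbolic directions'' are intrinsic; hence any biholomorphism must carry $E_1$ onto the analogous slice $E_2=\{w=0\}$ of the target. Composing $f$ with automorphisms of the target of the type $\varphi_v$ described in the Kim-Ninh-Yamamori-style list for these generalized domains (translations in the $z$-variable with the accompanying exponential twist in $w$), I may assume $f(0)=0$. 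Then, writing $f=(g,h)$ with $g$ the $z_0$-component and $h=(h_{(1)},\dots,h_{(l)})$ the fiber components, restriction to $E_1$ gives an entire map $z\mapsto g(z,0)$ of $\mathbb{C}^{n_0}$ whose image is the non-hyperbolic slice of the target; standard arguments (Liouville together with the boundedness of $|\det f'|^2$ extracted in step one, forcing $g(\cdot,0)$ to be affine, hence linear after normalization) show that $g(z,0)=Az$ for a linear $A$, while $h_{(j)}(z,0)\equiv 0$.

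The third step is the fiberwise analysis. For fixed $z$ near $0$, the fiber of the source over $z$ is the scaled generalized ellipsoid $\{\sum_j\|w_{(j)}\|^{2p_j}<e^{-\mu\|z\|^2}\}$, and $f$ must (after the normalizations above) map fibers to fibers, inducing for each $z$ a biholomorphism between scaled generalized complex ellipsoids. Here I invoke Kodama's Theorem \ref{Thm K}: because all exponents $p_i\neq1$ for $i\geq2$ (our standing assumption, and symmetrically on the target side $q_i\neq1$), every such biholomorphism between the ellipsoids is, up to rescaling, of the permutation-plus-unitary form $\zeta_k\mapsto\gamma_k(\zeta_{\sigma(k)})$. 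Matching exponents forces the permutation $\sigma$ with $p_{\sigma(j)}=q_j$ and dimensions $n_{\sigma(j)}=m_j$, and gives $h_{(j)}(z,w)=c_j(z)\,w_{(\sigma(j))}\Gamma_j$ with $\Gamma_j$ unitary and $c_j(z)$ a nowhere-zero holomorphic scalar. The defining inequality, together with $g(z,0)=Az$ and the relation it forces between $\mu,\nu$ and $A$ (namely $e^{-\nu\|Az\|^2}=e^{-\mu\|z\|^2}\prod$ of the scaling factors), then pins down the $c_j$ and yields $\nu\|Az\|^2\equiv\mu\|z\|^2$, i.e. $\sqrt{\nu/\mu}\,A\in\mathcal{U}(n)$, and the scalars $c_j$ become constants of modulus one which can be absorbed into $\Gamma_j$. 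Finally composing once more with an automorphism $\phi$ of the target (of the $\varphi_{U'}$-type and the translation-type) to undo the remaining normalizations produces exactly the claimed block form \eqref{eq3}.

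I expect the main obstacle to be the rigorous version of step two and the first part of step three on the unbounded domain: justifying that $|\det f'|^2$ is genuinely bounded on $E_1$ from the kernel identity (one must control the polylog expansion of the Bergman kernel and its non-vanishing/asymptotics as $t\to0^+$ and as $\|z\|\to\infty$), and then leveraging that boundedness via Liouville to conclude $g(\cdot,0)$ is affine and the fiber components vanish on $E_1$ — the non-compactness means one cannot simply quote boundary-regularity results, so the argument has to run entirely through the explicit kernel and entire-function estimates. Once $f(E_1)=E_2$ and the linearity of $g(\cdot,0)$ are in hand, the fiberwise application of Kodama's theorem and the bookkeeping of the exponential factors are essentially algebraic.
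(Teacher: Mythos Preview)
Your proposal has a genuine gap at the point you yourself flagged as the main obstacle, and the gap is not merely technical. In step two you want to deduce that $g(\cdot,0):\mathbb{C}^{n_0}\to\mathbb{C}^{m_0}$ is affine from Liouville together with the boundedness of $|\det f'|^2$ on the slice $E_1$. But bounded $|\det f'|$ on $\mathbb{C}^{n_0}$ only gives that the Jacobian determinant is \emph{constant}; it does not force the map to be affine. Polynomial automorphisms of $\mathbb{C}^{n_0}$ with constant Jacobian determinant abound (e.g.\ $(z_1,z_2)\mapsto(z_1+z_2^2,z_2)$), so this step does not go through. A second, related gap is in step three: you assert that after normalization $f$ ``must map fibers to fibers'', but nothing so far shows that the $z$-component $g(z,w)$ is independent of $w$; only the values on $E_1$ have been analysed. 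Without fiber preservation the application of Kodama's theorem never gets off the ground, and even if it did, you would still have to argue that the permutation $\sigma$ and the unitaries $\Gamma_j$ (not just the scalars $c_j$) are independent of $z$.

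The paper closes all of these gaps in one stroke by a tool your outline omits entirely: a Cartan-type linearity theorem for (possibly unbounded) circular domains, due to Ishi--Kai, which says that a biholomorphism between circular domains fixing the origin is linear provided the Bergman kernels satisfy $K(0,0)>0$ and $T(0,0)>0$. The explicit Bergman kernel \eqref{eq2.7} is used precisely to verify these two hypotheses, not to bound the Jacobian. Once $\phi\circ f$ is known to be globally linear, the block $B$ vanishes by the slice-preservation Lemma (your step two, which the paper proves by the direct Liouville argument on the bounded fiber components), the restriction to $z=0$ is a \emph{linear} biholomorphism between $\Sigma(\mathbf{n};\mathbf{p})$ and $\Sigma(\mathbf{m};\mathbf{q})$ to which the Tu--Wang classification of linear isomorphisms of generalized pseudoellipsoids applies, and then $C=0$ and the unitarity of $\sqrt{\nu/\mu}\,A$ drop out by evaluating the linear map on suitable boundary points. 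In short: replace your Jacobian/fiberwise strategy by the Ishi--Kai Cartan theorem, and the rest of your bookkeeping becomes exactly the paper's argument.
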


\begin{Corollary}\label{Coro.1.5}
Let $f:\; D_{n_{0}}^{\mathbf{n},\mathbf{p}}(\mu)\rightarrow D_{n_{0}}^{\mathbf{n},\mathbf{p}}(\mu)$ be a biholomorphic mapping with $f(0)=0$. Then we have
\begin{equation*}
f(z,w)=(z,w_{(\sigma(1))},\cdots,w_{(\sigma(l))})
\left(\begin{array}{ccccc}
A&\null&\null&\null&\null\\
\null&\Gamma_{1}&\null&\null&\null\\
\null&\null&\Gamma_{2}&\null&\null\\
\null&\null&\null&\ddots&\null\\
\null&\null&\null&\null&\Gamma_{l}\\
\end{array}\right),
\end{equation*}
where  $\sigma \in S_{l}$ is a permutation such that $n_{\sigma(j)}=n_{j}$, $p_{\sigma(j)}=p_{j} \;(1\leq j\leq l)$,
 $A\in \mathcal{U}(n_0)$ and $\Gamma_{i}\in \mathcal{U}(n_{i}) \; (1\leq i\leq l)$.
\end{Corollary}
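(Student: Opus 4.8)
The plan is to derive Corollary~\ref{Coro.1.5} directly from Theorem~\ref{Thm1} by specializing to the case where the target domain coincides with the source domain, namely $D_{m_0}^{\mathbf{m},\mathbf{q}}(\nu)=D_{n_0}^{\mathbf{n},\mathbf{p}}(\mu)$, and then exploiting the normalization $f(0)=0$. Applying Theorem~\ref{Thm1} to such an $f$ produces an automorphism $\phi\in\mathrm{Aut}(D_{n_0}^{\mathbf{n},\mathbf{p}}(\mu))$ and a permutation $\sigma\in S_l$ with $n_{\sigma(j)}=n_j$, $p_{\sigma(j)}=p_j$, together with matrices $A$ and $\Gamma_i$ as in \eqref{eq3}; here $\mu=\nu$ forces $\sqrt{\nu/\mu}A=A\in\mathcal{U}(n_0)$, which already gives the right block structure for $\phi\circ f$. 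So the only task is to show that $\phi$ can be absorbed, i.e.\ that $\phi\circ f$ being of this special block-diagonal form and $f$ fixing the origin together imply $f$ itself has this form.

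The key step is therefore to understand $\mathrm{Aut}(D_{n_0}^{\mathbf{n},\mathbf{p}}(\mu))$, or at least its isotropy subgroup at the origin, well enough to conclude. First I would observe that $g:=\phi\circ f$ sends $0$ to $\phi(f(0))=\phi(0)$, and from the explicit form of $g$ in \eqref{eq3} we have $g(0)=0$, hence $\phi(0)=0$. Thus $\phi$ lies in the isotropy group $\mathrm{Aut}_0(D_{n_0}^{\mathbf{n},\mathbf{p}}(\mu))$ of automorphisms fixing the origin. The crucial claim is that every such origin-fixing automorphism is itself of the block-diagonal permutation-plus-unitary form appearing in the statement. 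Granting this claim, $\phi^{-1}$ is also of that form, and since the set of all such mappings $(z,w)\mapsto(z,w_{(\tau(1))},\dots,w_{(\tau(l))})\,\mathrm{diag}(B,\Delta_1,\dots,\Delta_l)$ with $B\in\mathcal{U}(n_0)$, $\Delta_i\in\mathcal{U}(n_i)$ and admissible permutations $\tau$ is closed under composition, we get $f=\phi^{-1}\circ g$ in the desired form.

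The main obstacle is establishing that claim about $\mathrm{Aut}_0$. This should come from the structure theory of $\mathrm{Aut}(D_{n_0}^{\mathbf{n},\mathbf{p}}(\mu))$ developed earlier in the paper (analogous to the Kim--Ninh--Yamamori description of $\mathrm{Aut}(D_{n,m}(\mu))$ recalled in the excerpt): the full automorphism group should be generated by the ``rotations'' $\varphi_U$ (the block-diagonal unitary maps including the permutations allowed by the ellipsoid fiber via Theorem~\ref{Thm K}), together with the ``translation-type'' automorphisms $\varphi_v$ attached to $v\in\mathbb{C}^{n_0}$ which move the base point in $\mathbb{C}^{n_0}$ and multiply the $w$-coordinates by an exponential factor; the latter fix $0$ only when $v=0$. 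Combined with the fact that any automorphism of the generalized complex ellipsoid fiber fixing its origin must be of the permutation-plus-unitary type (the $\psi_a$-type maps in Theorem~\ref{Thm K} move the origin of the ball factor, and we have assumed $p_i\neq 1$ for $i\geq 2$), one concludes that an element of $\mathrm{Aut}_0$ cannot involve any nontrivial $\varphi_v$ or any fiber automorphism that displaces the origin, leaving exactly the stated form. Once this structural fact is in hand the corollary is immediate, so I would phrase the proof as: apply Theorem~\ref{Thm1}, note $\phi(0)=0$, invoke the description of $\mathrm{Aut}_0(D_{n_0}^{\mathbf{n},\mathbf{p}}(\mu))$ to write $\phi$ in block-diagonal form, and compose.
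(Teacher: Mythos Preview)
Your argument has a circularity problem. You apply Theorem~\ref{Thm1} as a black box, obtain some $\phi\in\mathrm{Aut}_0(D_{n_0}^{\mathbf{n},\mathbf{p}}(\mu))$, and then want to invoke ``the description of $\mathrm{Aut}_0(D_{n_0}^{\mathbf{n},\mathbf{p}}(\mu))$'' to conclude that $\phi$ is of block-diagonal permutation-plus-unitary type. But that description is precisely the content of Corollary~\ref{Coro.1.5} applied to $\phi$, so you are assuming what you want to prove. You suggest deriving this from the full automorphism group structure (the analogue of the Kim--Ninh--Yamamori theorem), but in this paper that structure is Theorem~\ref{Thm.1.6}, which is stated and proved \emph{after} Corollary~\ref{Coro.1.5} and in fact uses Corollary~\ref{Coro.1.5} in its proof. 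So the appeal is not available at this point.

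The paper avoids this by not treating Theorem~\ref{Thm1} as a black box. Look at its proof: the automorphism $\phi$ is not an abstract existence statement but is written down explicitly as the translation-type map sending $(z,w)$ to $(z-a,\,w_{(1)}(e^{2\nu\langle z,a\rangle-\nu\|a\|^2})^{1/2q_1},\ldots)$, where $(a,0)=f(0,0)$. Under the hypothesis $f(0)=0$ one has $a=0$, so this particular $\phi$ is the identity, hence $\phi\circ f=f$. The remainder of the proof of Theorem~\ref{Thm1} (linearity via Theorem~\ref{Thm2.2.1}, block structure via Lemma~\ref{Thm1.3}, $C=0$, and $\sqrt{\nu/\mu}A$ unitary with $\nu=\mu$) then applies verbatim to $f$ itself. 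That is all the paper's one-line proof is saying: the role of $\phi$ is only to arrange $\phi\circ f(0)=0$, and when $f(0)=0$ already there is nothing to arrange.
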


As a consequence, it is easy for us to prove the following results.

\begin{Theorem}\label{Thm.1.6}
The automorphism
group $\mathrm{Aut}(D_{n_{0}}^{\mathbf{n},\mathbf{p}}(\mu))$ is generated by the following
mappings:
$$
\varphi_{A}:(z,w_{(1)},\cdots,w_{(l)})\longmapsto
(zA,w_{(1)},\cdots,w_{(l)});$$
$$\varphi_{D}:(z,w_{(1)},\cdots,w_{(l)})\longmapsto (z,(w_{(\sigma(1))},\cdots,w_{(\sigma(l))})D);$$
$$\varphi_{a}:(z,w)\longmapsto(z+a,w_{(1)}(e^{-2\mu{<z,a>}-\mu{\left\lVert a\right\rVert}^{2}})^{\frac{1}{2p_{1}}},\cdots,w_{(l)}(e^{-2\mu{<z,a>}-\mu{\left\lVert a\right\rVert}^{2}})^{\frac{1}{2p_{l}}}),$$
where $a\in \mathbb{C}^{n_0}$,  $A\in \mathcal{U}(n_{0})$,  $\sigma \in S_{l}$ is a permutation such that $n_{\sigma(j)}=n_{j}$, $p_{\sigma(j)}=p_{j}\; (1\leq j\leq l)$, and
\begin{equation*}
D=
\left(\begin{array}{cccc}
\Gamma_{1}&\null&\null&\null\\
\null&\Gamma_{2}&\null&\null\\
\null&\null&\ddots&\null\\
\null&\null&\null&\Gamma_{l}\\
\end{array}\right),
\end{equation*}
in which $\Gamma_{i}\in \mathcal{U}(n_{i})$ ($1\leq i \leq l$).
\end{Theorem}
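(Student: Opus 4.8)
The plan is to deduce Theorem~\ref{Thm.1.6} from Corollary~\ref{Coro.1.5} by the standard ``transitivity plus stabilizer'' argument. First I would verify directly that each of the three families $\varphi_A$, $\varphi_D$, $\varphi_a$ genuinely belongs to $\mathrm{Aut}(D_{n_0}^{\mathbf n,\mathbf p}(\mu))$: for $\varphi_A$ and $\varphi_D$ this is immediate since $zA$ and the block-diagonal unitary action preserve $\|z\|^2$ and each $\|w_{(j)}\|^{2p_j}$; for $\varphi_a$ one checks, exactly as in the $l=1$ Fock--Bargmann--Hartogs case, that the factor $e^{-2\mu\langle z,a\rangle-\mu\|a\|^2}$ has modulus $e^{-\mu(\|z+a\|^2-\|z\|^2)}$, so that raising it to the power $1/(2p_j)$ scales $\|w_{(j)}\|^{2p_j}$ by $e^{-\mu(\|z+a\|^2-\|z\|^2)}$, and hence $\sum_j\|w_{(j)}\|^{2p_j}<e^{-\mu\|z\|^2}$ transforms into the defining inequality at the point $z+a$. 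Each map is visibly biholomorphic with holomorphic inverse of the same type (for $\varphi_a$ the inverse is $\varphi_{-a}$), so all three lie in the automorphism group, and therefore so does the group $G$ they generate.

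Next I would show $G=\mathrm{Aut}(D_{n_0}^{\mathbf n,\mathbf p}(\mu))$. Take an arbitrary $\psi\in\mathrm{Aut}(D_{n_0}^{\mathbf n,\mathbf p}(\mu))$ and let $\psi(0)=(z_0,w^0_{(1)},\dots,w^0_{(l)})$. The key point is that the orbit of the origin under $G$ is already all of $D_{n_0}^{\mathbf n,\mathbf p}(\mu)$: given any point $p=(z_0,w^0)$ in the domain, apply $\varphi_{z_0}$ (this moves $0$ to a point with first coordinate $z_0$ and $w$-part $0$), and then I need a further automorphism in $G$ fixing the $z$-fibre structure that moves the zero $w$-part to $w^0$ — but this requires care, because the block-unitary maps $\varphi_D$ only rotate within fibres and cannot change the radius. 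So instead the cleaner route is: compose $\psi$ with $\varphi_{-z_0}$ to arrange that the image of the origin has $z$-coordinate $0$, i.e. $\varphi_{-z_0}\circ\psi$ sends $0$ to some $(0,w^0)$; but then $(0,w^0)$ must satisfy $\sum_j\|w^0_{(j)}\|^{2p_j}<1$, and I claim there is no automorphism in $G$ moving $(0,0)$ to $(0,w^0)$ with $w^0\neq0$ either — which tells me that in fact I should not try to make the orbit transitive but rather argue differently.

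Reconsidering, the correct and economical argument is: for $\psi\in\mathrm{Aut}(D_{n_0}^{\mathbf n,\mathbf p}(\mu))$ with $\psi(0)=(z_0,w^0)$, first observe (using e.g. invariance of the Bergman kernel, or the structure of the domain) that $\psi(0)$ must have $w^0=0$, i.e. the slice $\{w=0\}\cong\mathbb C^{n_0}$ is ``central'' — indeed $\varphi_{z_0}^{-1}\circ\psi$ fixes... hmm, I would instead simply invoke that $g:=\varphi_{-z_0}\circ\psi$ composed with an appropriate $\varphi_a$ can be chosen so that $\varphi_a\circ g$ fixes the origin, after first establishing $w^0=0$ from the fact that $\varphi_{z_0}$ is the unique-up-to-stabilizer translation and the $w$-coordinate of any image of $0$ is forced to vanish because the fibre over $z_0$ is a generalized ellipsoid whose only distinguished point reachable is its centre. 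Granting $w^0=0$, set $g:=\varphi_{-z_0}\circ\psi\in\mathrm{Aut}(D_{n_0}^{\mathbf n,\mathbf p}(\mu))$; then $g(0)=0$, so Corollary~\ref{Coro.1.5} applies and gives that $g$ is a permutation-composed-with-block-unitary map, which is precisely a composition of a $\varphi_A$ and a $\varphi_D$. Hence $\psi=\varphi_{z_0}\circ g\in G$, completing the proof. The main obstacle is the claim that $\psi(0)$ has vanishing $w$-component; I would handle it by noting that $\varphi_{-z_0}\circ\psi$ is an automorphism fixing the fibre $\{z=0\}$ setwise and hence restricts to an automorphism of the generalized ellipsoid $\Sigma(\mathbf n;\mathbf p)$, whose automorphism group by Theorem~\ref{Thm K} (with the standing assumption $p_i\neq1$ for $i\geq2$) fixes the origin of that fibre — so that after one more application of a $\varphi_a$ with $a$ chosen to cancel any residual ball-automorphism in the $w_{(1)}$-direction, the composed map fixes $0\in D_{n_0}^{\mathbf n,\mathbf p}(\mu)$ and Corollary~\ref{Coro.1.5} finishes the argument.
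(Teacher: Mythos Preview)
Your overall architecture --- verify that $\varphi_A,\varphi_D,\varphi_a$ are automorphisms, then for an arbitrary $\psi$ write $\psi(0)=(z_0,w^0)$, precompose with $\varphi_{-z_0}$ to get an automorphism fixing the origin, and invoke Corollary~\ref{Coro.1.5} --- is exactly the paper's argument. The difficulty, which you correctly isolate, is the claim $w^0=0$; but your proposed resolution of it does not work.

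Specifically: from $\varphi_{-z_0}\circ\psi(0)=(0,w^0)$ you cannot conclude that $\varphi_{-z_0}\circ\psi$ preserves the fibre $\{z=0\}$ setwise, so there is no restriction to $\mathrm{Aut}(\Sigma(\mathbf n;\mathbf p))$ available. Even if there were, Theorem~\ref{Thm K} does \emph{not} say that automorphisms of $\Sigma(\mathbf n;\mathbf p)$ fix the origin --- when $p_1=1$ the ball automorphisms $T_a$ move it. And your suggested further composition with a $\varphi_a$ cannot help: the maps $\varphi_a$ translate in $z$ and rescale each $w_{(j)}$ by a scalar, so they cannot ``cancel a residual ball-automorphism in the $w_{(1)}$-direction''.

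The paper closes this gap with a one-line Liouville argument (Lemma~\ref{Lemma 3.1}): write $\psi(z,0)=(f_1(z),f_2(z))$; then $\sum_j\|f_{2j}(z)\|^{2q_j}<e^{-\nu\|f_1(z)\|^2}\le 1$, so each $f_{2j}$ is a bounded entire map on $\mathbb C^{n_0}$, hence constant; since $\psi$ is biholomorphic $f_1$ is unbounded, forcing the constant to be $0$. Thus any automorphism sends the slice $\{w=0\}$ into itself, and in particular $\psi(0,0)=(z_0,0)$. With this in hand your final paragraph reduces to exactly the paper's proof: $\varphi_{-z_0}\circ\psi$ fixes the origin, Corollary~\ref{Coro.1.5} gives $\varphi_{-z_0}\circ\psi=\varphi_D\circ\varphi_A$, and hence $\psi=\varphi_{z_0}\circ\varphi_D\circ\varphi_A$.
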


Now, for $\mathbf{p}$ and $\mathbf{q}$,  we introduce notation:
\begin{equation*}
\epsilon=\begin{cases}
1,\;&p_{1}=1\\
0,\;&p_{1}\neq1\\
\end{cases},\;\;\;
\delta=\begin{cases}
1,\;&q_{1}=1\\
0,\;&q_{1}\neq1\\
\end{cases}.
\end{equation*}

\begin{Theorem}\label{Thm1.2}Suppose $D_{n_{0}}^{\mathbf{n},\mathbf{p}}(\mu)$ and $D_{m_{0}}^{\mathbf{m},\mathbf{q}}(\nu)$ are two equidimensional generalized Fock-Bargmann-Hartogs domains with $\min \{n_{1+\epsilon}, n_{2},\cdots,n_{l},n_{1}+\cdots+n_{l}\}\geq 2$ and $\min \{m_{1+\delta}, m_{2}, \cdots,m_{l},m_{1}+\cdots+m_{l}\}\geq 2$. Then any proper holomorphic mapping  between $D_{n_{0}}^{\mathbf{n},\mathbf{p}}(\mu)$ and $D_{m_{0}}^{\mathbf{m},\mathbf{q}}(\nu)$ must be a biholomorphism.
\end{Theorem}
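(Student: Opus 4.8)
The plan is to show that every proper holomorphic $f\colon D_1:=D_{n_0}^{\mathbf{n},\mathbf{p}}(\mu)\to D_2:=D_{m_0}^{\mathbf{m},\mathbf{q}}(\nu)$ is in fact a biholomorphism; once this is known, Theorem~\ref{Thm1} (together with Corollary~\ref{Coro.1.5} and Theorem~\ref{Thm.1.6}) identifies $f$ precisely and there is nothing more to prove. Since $D_1$ and $D_2$ are equidimensional and connected, $f$ is a finite surjective branched covering, so it suffices to show that $f$ respects the two Hartogs fibrations $\pi_1\colon D_1\to\mathbb{C}^{n_0}$ and $\pi_2\colon D_2\to\mathbb{C}^{m_0}$, i.e. that $\pi_2\circ f=g\circ\pi_1$ for some holomorphic $g\colon\mathbb{C}^{n_0}\to\mathbb{C}^{m_0}$: the base map $g$ and the induced fiber maps will then each be forced to be biholomorphic, whence $f$ has a holomorphic inverse. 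The engine is Bell's transformation formula under a proper holomorphic map, $\sum_{z'\in f^{-1}(w)}K_{D_1}(z',\zeta)/J_f(z')=\overline{J_f(\zeta)}\,K_{D_2}(w,f(\zeta))$, combined with the explicit closed form of the Bergman kernel $K_{D_{n_0}^{\mathbf{n},\mathbf{p}}(\mu)}$ established in the paper, in which the fiber‑variable dependence is built out of the generalized complex ellipsoid kernel and the weight $e^{\mu\|z\|^{2}}$.

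First I would show $f(E_1)\subseteq E_2$, where $E_1:=\mathbb{C}^{n_0}\times\{0\}$ and $E_2:=\mathbb{C}^{m_0}\times\{0\}$, using the intrinsic description of $E_i$ as the union of the images of all nonconstant entire curves $\mathbb{C}\to D_i$: for any entire $\gamma=(\gamma_z,\gamma_w)$ in $D_1$ the defining inequality makes $\gamma_w$ bounded, hence constant by Liouville, and a nonzero such constant would also bound $\|\gamma_z\|$ and make $\gamma_z$ constant, while every point of $E_1$ lies on an affine line contained in $E_1$; since $f$ is finite, $f\circ\gamma$ remains nonconstant, so $f(E_1)\subseteq E_2$, and by finiteness $f(E_1)$ is an irreducible $n_0$‑dimensional subvariety of $E_2\cong\mathbb{C}^{m_0}$, giving $n_0\le m_0$. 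The reverse inclusion $f^{-1}(E_2)\subseteq E_1$ (whence $n_0=m_0$, using that $f$ is onto) follows because every irreducible component of the $f$‑preimage of an affine line of $E_2$ maps properly and finitely onto that line but, being a smooth affine and hence parabolic curve in its normalization, has bounded — thus constant — fiber coordinates, so it lies in a bounded slice $\{(z,w_0)\}\cap D_1$ with $w_0\neq 0$, contradicting properness onto $\mathbb{C}$ unless it lies in $E_1$; the Jacobian locus of $f$ is dealt with by a genericity or limiting argument, and alternatively this step is visible directly from the explicit kernels on the central subspaces. In particular $g:=f|_{E_1}$ is a proper holomorphic self‑map of $\mathbb{C}^{n_0}$.

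The crux is to upgrade this to $\pi_2\circ f=g\circ\pi_1$. For this I would differentiate Bell's formula in the fiber variables at points $\zeta=(z,0)\in E_1$ and use the explicit form of $K_{D_i}$ to conclude that the Taylor expansion of $f$ along $E_1$ has the block‑triangular shape of \eqref{eq3}: the fiber‑part of $f$ vanishes on $E_1$ and carries each fiber $\{z\}\times\Sigma_{z}(\mathbf{n},\mathbf{p})$ into a single fiber $\{g(z)\}\times\Sigma_{g(z)}(\mathbf{m},\mathbf{q})$. Granting this, each fiber map is proper and, since $g$ turns out to be surjective, onto; after the obvious diagonal rescalings it becomes a proper holomorphic map between the standard generalized complex ellipsoids $\Sigma(\mathbf{n},\mathbf{p})$ and $\Sigma(\mathbf{m},\mathbf{q})$, and under the hypotheses $\min\{n_{1+\epsilon},n_2,\dots,n_l,n_1+\cdots+n_l\}\ge 2$ and its $(\mathbf{m},\mathbf{q})$‑analogue every such map is a biholomorphism — the generalized‑ellipsoid analogue of Alexander's theorem, the dimension conditions being exactly what rules out the disk phenomenon $\zeta\mapsto\zeta^{k}$ in any essential factor (cf. Dini-Primicerio \cite{Dini}, or argue from the ellipsoid's own Bergman kernel), and Theorem~\ref{Thm K} then also forces $(\mathbf{m},\mathbf{q})$ to be a permutation of $(\mathbf{n},\mathbf{p})$. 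Meanwhile, tracking the defining exhaustion $e^{\mu\|z\|^{2}}\sum_{j}\|w_{(j)}\|^{2p_{j}}$ of $D_1$ through $f$ to that of $D_2$ yields a functional equation for $g$ which, upon applying $\partial\bar\partial$ in $z$, forces $g$ to be affine with $\sqrt{\nu/\mu}$ times its linear part unitary, hence biholomorphic. Therefore $f(z,w)=(g(z),\widetilde{h}_{z}(w))$ with $g$ and all $\widetilde{h}_{z}$ biholomorphic and holomorphic in $z$, so $f^{-1}$ is holomorphic and $f$ is a biholomorphism; Theorem~\ref{Thm1} then supplies the stated normal form.

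The step I expect to be the main obstacle is precisely this passage from control on the central subspace to the fibration‑respecting identity $\pi_2\circ f=g\circ\pi_1$: one must control the behavior and branching of $f$ transverse to the fibers, and it is exactly here that the explicit Bergman kernel of $D_{n_0}^{\mathbf{n},\mathbf{p}}(\mu)$ and the dimension hypotheses — which are what guarantee that the induced fiber maps are unbranched — are indispensable.
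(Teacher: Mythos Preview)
Your proposal follows a genuinely different strategy from the paper, and the step you yourself flag as ``the main obstacle'' is where it breaks down. The paper never attempts to prove that $f$ respects the Hartogs fibration $\pi_2\circ f=g\circ\pi_1$; instead it shows directly that the branch locus of $f$ is empty, as follows. First, Lemma~\ref{Thm2.3.2} extends $f$ holomorphically past $\overline{D_1}$. Proposition~\ref{pro.2.4.1} identifies the strongly pseudoconvex part $b_0D_1$ of the boundary, and Pin\v{c}uk's lemma forces the Jacobian locus $M=\{\det df=0\}$ to map $M\cap b_0D_1$ into the non--strongly-pseudoconvex locus $b_1D_2\cup b_2D_2$; a codimension count using $\min\{m_{1+\delta},\dots\}\ge 2$ then rules out $M\cap bD_1\neq\emptyset$. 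Finally, if $S:=M\cap D_1\neq\emptyset$, the fiber-variable bound $|w_{ln_l}|\le 1$ makes $S$ an affine algebraic hypersurface; its projective closure, intersected with the hyperplane at infinity, is trapped in $\{s_{(1)}=\cdots=s_{(l)}=0\}$, forcing $n_1+\cdots+n_l\le 1$, a contradiction. Hence $f$ is unbranched and, $D_1$ being simply connected, biholomorphic.

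Your route would require extracting, from Bell's transformation formula and the infinite-series kernel \eqref{eq2.7}, that the $\mathbb{C}^{m_0}$-component of $f(z,w)$ is independent of $w$. You only gesture at this (``differentiate Bell's formula in the fiber variables''), and it is far from clear how to carry it out: Bell's formula involves a sum over the full fiber $f^{-1}(w)$ with Jacobian weights, and the series for $K_{D_i}$ mixes the $z$- and $w$-variables through the exponents $\lambda_\alpha$, so isolating a clean block-triangular structure for $f$ from it is not routine. Absent this, the subsequent reduction to proper maps of generalized ellipsoids never gets off the ground. Your preliminary steps on $f(E_1)\subseteq E_2$ and $f^{-1}(E_2)\subseteq E_1$ are reasonable (and parallel Lemma~\ref{Lemma 3.1}), but they control $f$ only along the zero section, not transversally, and that is exactly where the paper's boundary/algebraic-geometry argument does the real work.
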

\begin{Remark}
The conditions $\min\{n_{1+\epsilon}, n_{2},\cdots,n_{l}\}\geq 2$ can not be removed. For example, $n_{1}=1$ (i.e, $w_{1}\in\mathbb{C}$), $p_{1}\neq1$, and
$$F(z,w):(z,w_{(1)},\cdots,w_{(l)})\rightarrow(z,w_{(1)}^{2},w_{(2)},\cdots,w_{(l)}).$$
Then $F$ is a proper holomorphic mapping between $D_{n_{0}}^{\mathbf{n},\mathbf{p}}(\mu)$ and $D_{n_{0}}^{\mathbf{n},\mathbf{q}}(\mu)$ where $\mathbf{q}=(p_{1}/2,p_{2}, \cdots,p_{l})$. $F$ is not a biholomorphism.
\end{Remark}

\begin{Corollary}Suppose $D_{n_{0}}^{\mathbf{n},\mathbf{p}}(\mu)$ is a generalized Fock-Bargmann-Hartogs domain with $$\min \{n_{1+\epsilon}, n_{2},\cdots,n_{l},n_{1}+\cdots+n_{l}\}\geq 2.$$ Then any proper holomorphic self-mapping of $D_{n_{0}}^{\mathbf{n},\mathbf{p}}(\mu)$ must be an automorphism.
\end{Corollary}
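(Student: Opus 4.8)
The plan is to deduce this directly from Theorem~\ref{Thm1.2}. Given a proper holomorphic self-mapping $f\colon D_{n_{0}}^{\mathbf{n},\mathbf{p}}(\mu)\to D_{n_{0}}^{\mathbf{n},\mathbf{p}}(\mu)$, I would apply Theorem~\ref{Thm1.2} with the target domain $D_{m_{0}}^{\mathbf{m},\mathbf{q}}(\nu)$ taken to be the source domain itself, i.e. $m_{0}=n_{0}$, $\mathbf{m}=\mathbf{n}$, $\mathbf{q}=\mathbf{p}$ and $\nu=\mu$. With this identification one has $\delta=\epsilon$, so the two hypotheses $\min\{n_{1+\epsilon},n_{2},\dots,n_{l},n_{1}+\cdots+n_{l}\}\geq 2$ and $\min\{m_{1+\delta},m_{2},\dots,m_{l},m_{1}+\cdots+m_{l}\}\geq 2$ required in Theorem~\ref{Thm1.2} collapse to the single assumption made in the statement of the Corollary. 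Hence Theorem~\ref{Thm1.2} applies and $f$ is a biholomorphism of $D_{n_{0}}^{\mathbf{n},\mathbf{p}}(\mu)$ onto itself.

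It then remains only to note that a biholomorphic self-mapping of a domain is, by definition, an element of its automorphism group; thus $f\in\mathrm{Aut}(D_{n_{0}}^{\mathbf{n},\mathbf{p}}(\mu))$, which is the desired conclusion. There is essentially no obstacle here: the only point worth verifying is that the pair of dimension conditions in Theorem~\ref{Thm1.2} does indeed reduce to the single one stated in the Corollary once the two domains are taken to be equal, and this is immediate from $\epsilon=\delta$. If one wished to be completely self-contained, one could instead re-run the argument proving Theorem~\ref{Thm1.2} in the case of coinciding domains, but invoking Theorem~\ref{Thm1.2} as a black box is the cleanest route.
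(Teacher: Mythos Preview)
Your proposal is correct and is exactly the approach the paper intends: the Corollary is stated immediately after Theorem~\ref{Thm1.2} with no separate proof, so it is meant to follow by specializing Theorem~\ref{Thm1.2} to the case where source and target coincide. Your observation that $\delta=\epsilon$ under this identification, so that the two dimension hypotheses collapse to one, is precisely the only check needed.
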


\begin{Remark}
The conditions $n_{1}+\cdots+n_{l}\geq 2$ can not be removed. For instance, with no loss of generality, we can assume $n_{1}=1$ and $n_{i}=0$ ($2\leq i\leq l$). Then
$$F:(z,w_{(1)})\rightarrow (\sqrt{2}z,w_{(1)}^{2})$$
is a  proper holomorphic self-mapping of $D_{n_{0}}^{\mathbf{n},\mathbf{p}}(\mu)$ which is not an automorphism.
\end{Remark}

The paper is organized as follows. In Section 2, using the explicit formula for the Bergman kernels of the generalized Fock-Bargmann-Hartogs domains, we prove
 that a proper holomorphic mapping between two equidimensional generalized Fock-Bargmann-Hartogs domains extends holomorphically to their closures and check that the Cartan's theorem holds also for the generalized
Fock-Bargmann-Hartogs domains. In Section 3, we  exploit the boundary structure of  generalized Fock-Bargmann-Hartogs domains to prove our results in this paper.

\section{Preliminaries}
\subsection{The Bergman kernel of the domain $D_{n_{0}}^{\mathbf{n},\mathbf{p}}$}

For a domain $\Omega$ in $\mathbb{C}^n$,  let $A^2(\Omega)$ be the
Hilbert space of square integrable holomorphic functions on $\Omega$
with the inner product:
$$\langle f,g\rangle=\int_{\Omega}f(z)\overline{g(z)} dV(z)\;\;(f,g\in \mathcal{O}(\Omega)),$$
where $dV$ is the Euclidean volume form. The Bergman kernel $K(z,w)$
of $A^2(\Omega)$ is defined as the reproducing kernel of the Hilbert
space $A^2(\Omega)$, that is, for all $f\in A^2(\Omega),$ we have
$$f(z)=\int_{\Omega}f(w)K(z,w)dV(w) \;\;(z\in\Omega).$$
For a positive continuous function $p$ on $\Omega$, let
$A^2(\Omega,p)$ be the weighted Hilbert space of square integrable
holomorphic functions with respect to the weight function $p$ with
the inner product:
$$\langle f,g\rangle=\int_{\Omega}f(z)\overline{g(z)}p(z)dV(z)\;\;\; (f,g\in \mathcal{O}(\Omega)).$$
Similarly, the weighted Bergman kernel $K_{A^2(\Omega,p)}$ of
$A^2(\Omega,p)$ is defined as the reproducing kernel of the Hilbert
space $A^2(\Omega,p)$. For a positive integer $m$, define the
Hartogs domain $\Omega_{m,p}$ over $\Omega$  by
$$\Omega_{m,p}=\{(z,w)\in\Omega\times\mathbb{C}^m: \|w\|^2<p(z)\}.$$

Ligocka \cite{L1, L2} showed that the Bergman kernel of
$\Omega_{m,p}$ can be expressed as infinite sum in terms of the
weighted Bergman kernel of $A^2(\Omega,p^k)\;(k=1,2,\cdots)$ as
follows.

\begin{Theorem}[Ligocka \cite{L2}]\label{Thm.2.1} {\it Let
$K_m$ be the Bergman kernel of $\Omega_{m,p}$ and let
$K_{A^2(\Omega,p^k)}$ be the weighted Bergman kernel of
$A^2(\Omega,p^k)$ $(k=1,2,\cdots)$. Then
$$K_m((z,w),(t,s))=\frac{m!}{\pi^m}\sum_{k=0}^{\infty}\frac{(m+1)_k}{k!}K_{A^2(\Omega,p^{k+m})}(z,t) \langle w,s\rangle^k,$$
where $(a)_k$ denotes the Pochhammer symbol
$(a)_k=a(a+1)\cdots(a+k-1)$. }
\end{Theorem}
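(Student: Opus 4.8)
The plan is to exploit the complete Reinhardt structure of the fibers of $\Omega_{m,p}$ in the $w$-direction in order to split $A^2(\Omega_{m,p})$ into an orthogonal sum of \emph{weighted} Bergman spaces over $\Omega$. First I would expand an arbitrary $f\in A^2(\Omega_{m,p})$ in a Taylor series along the fibers, $f(z,w)=\sum_{\alpha\in\mathbb{N}^m}f_\alpha(z)\,w^\alpha$ with $f_\alpha\in\mathcal{O}(\Omega)$; since $\Omega_{m,p}$ is invariant under the torus action $w\mapsto(e^{i\theta_1}w_1,\dots,e^{i\theta_m}w_m)$, the monomials $w^\alpha$ are mutually orthogonal in $A^2(\Omega_{m,p})$, so that $\|f\|^2=\sum_\alpha\|f_\alpha w^\alpha\|^2$ and the problem is reduced to computing $\|f_\alpha w^\alpha\|^2$.

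Next I would evaluate this norm by Fubini's theorem, integrating first over the fiber $\{\|w\|^2<p(z)\}$ and then rescaling $w=\sqrt{p(z)}\,u$ onto the unit ball $\mathbb{B}^m\subset\mathbb{C}^m$. This gives $\|f_\alpha w^\alpha\|^2=c_\alpha\,\|f_\alpha\|^2_{A^2(\Omega,p^{m+|\alpha|})}$, where $c_\alpha=\int_{\mathbb{B}^m}|u^\alpha|^2\,dV(u)=\pi^m\alpha!/(m+|\alpha|)!$ is the classical monomial integral over the ball. Consequently $A^2(\Omega_{m,p})$ is the orthogonal Hilbert-space direct sum of the subspaces $\{g(z)\,w^\alpha:\ g\in A^2(\Omega,p^{m+|\alpha|})\}$, $\alpha\in\mathbb{N}^m$, and if $\{e^{(k)}_j\}_j$ denotes an orthonormal basis of $A^2(\Omega,p^k)$ then $\{c_\alpha^{-1/2}e^{(m+|\alpha|)}_j(z)\,w^\alpha\}_{j,\alpha}$ is an orthonormal basis of $A^2(\Omega_{m,p})$.

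Summing the associated reproducing kernels, I would obtain
\[
K_m((z,w),(t,s))=\sum_{\alpha\in\mathbb{N}^m}\frac{w^\alpha\bar s^\alpha}{c_\alpha}\,K_{A^2(\Omega,p^{m+|\alpha|})}(z,t)=\sum_{k=0}^{\infty}K_{A^2(\Omega,p^{m+k})}(z,t)\sum_{|\alpha|=k}\frac{w^\alpha\bar s^\alpha}{c_\alpha},
\]
and it then remains to evaluate the inner sum: inserting $c_\alpha$ and invoking the multinomial identity $\sum_{|\alpha|=k}x^\alpha/\alpha!=(x_1+\cdots+x_m)^k/k!$ with $x_j=w_j\bar s_j$ turns $\sum_{|\alpha|=k}w^\alpha\bar s^\alpha/c_\alpha$ into $\frac{(m+k)!}{\pi^m k!}\langle w,s\rangle^k$; since $(m+k)!=m!\,(m+1)_k$ this is precisely the asserted formula.

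The purely algebraic manipulations above are routine; the step I expect to be the real obstacle is the functional-analytic bookkeeping. One must verify that the fiberwise Taylor series of $f$ converges in the $L^2(\Omega_{m,p})$ norm and that $A^2(\Omega_{m,p})$ coincides with the \emph{closed} orthogonal direct sum written above --- i.e.\ that no square-integrable holomorphic function is lost and that each coefficient $f_\alpha$ genuinely lies in $A^2(\Omega,p^{m+|\alpha|})$ --- and then that the interchange of the summation over $\alpha$ with the kernel expansion is legitimate, which amounts to locally uniform convergence of the final series in $(z,w,t,s)$. Both points can be settled by Parseval's identity on the torus fibers, monotone/dominated convergence, and the standard local boundedness estimates for Bergman kernels, but they are where the substance of the argument lies rather than in the combinatorics.
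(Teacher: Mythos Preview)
Your argument is correct and is precisely the standard Forelli--Rudin/Ligocka proof: decompose $A^2(\Omega_{m,p})$ via the torus action on the fibers, identify each homogeneous component with a weighted Bergman space on $\Omega$ using the monomial integrals over $\mathbb{B}^m$, and sum the resulting orthonormal basis. Note, however, that the paper does not supply its own proof of this statement---Theorem~\ref{Thm.2.1} is quoted from Ligocka \cite{L2} as a known result. The paper instead reproduces exactly your strategy in its proof of the generalized kernel formula (Theorem~\ref{4}): Lemma~\ref{lemma2.5} carries out your Taylor-expansion-plus-orthogonality step to show each $f_\alpha$ lands in the appropriate weighted space, Theorem~\ref{Thm2.4} plays the role of your ball integral $c_\alpha$, and the proof of Theorem~\ref{4} then assembles the kernel from these ingredients. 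So your proposal matches both Ligocka's original argument and the method the present paper adopts for its extension.
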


\vskip 6pt The Fock-Bargmann space is the weighted Hilbert space
$A^2(\mathbb{C}^n,e^{-\mu\|z\|^2})$ on $\mathbb{C}^n$ with the
Gaussian weight function $e^{-\mu\|z\|^{2}}\;(\mu>0)$.  The
reproducing kernel of $A^2(\mathbb{C}^n,e^{-\mu\|z\|^2})$, called
the Fock-Bargmann kernel, is ${\mu^n e^{\mu \langle
z,t\rangle}}/{\pi^n}$ (see Bargmann \cite{Ba}). Thus, the
Fock-Bargmann-Hartogs domain $D_{n,m}=\{(z,w)\in
\mathbb{C}^{n}\times \mathbb{C}^{m}: \left\lVert w \right\rVert^{2}
< e^{-\mu{\left\lVert z\right\rVert}^{2}}\}\;(\mu>0)$ and the
Fock-Bargmann space $A^2(\mathbb{C}^n,e^{-\mu\|z\|^2})$ are closely
related. In 2013, using Theorem \ref{Thm.2.1} and the expression of the
Fock-Bargmann kernel, Yamamori \cite{Y} gave the Bergman kernel of
the Fock-Bargmann-Hartogs domain $D_{n,m}$ as follows.

\vskip 6pt

\begin{Theorem} [Yamamori \cite{Y}] {\it The Bergman
kernel of the Fock-Bargmann-Hartogs domain $D_{n,m}$ is given
by
\begin{align*}
 \hskip 12pt K_{D_{n,m}}((z,w),(t,s))
=\frac{m!\mu^n}{\pi^{m+n}}\sum_{k=0}^{\infty}\frac{(m+1)_k(k+m)^n}{k!}
e^{\mu(k+m)\langle z,t\rangle} \langle w,s\rangle^k,
\end{align*}
where $(a)_k$ denotes the Pochhammer symbol
$(a)_k=a(a+1)\cdots(a+k-1)$.}
\end{Theorem}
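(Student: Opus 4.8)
The plan is to recognize $D_{n,m}$ as a Hartogs domain of the type considered in Theorem~\ref{Thm.2.1} and then reduce everything to the (parameter-dependent) Fock--Bargmann kernel. Take $\Omega=\mathbb{C}^n$ and the positive continuous weight $p(z)=e^{-\mu\|z\|^2}$; then by definition the Hartogs domain $\Omega_{m,p}=\{(z,w)\in\mathbb{C}^n\times\mathbb{C}^m:\|w\|^2<p(z)\}$ is exactly $D_{n,m}$. Theorem~\ref{Thm.2.1} therefore applies and gives
\[
K_{D_{n,m}}((z,w),(t,s))=\frac{m!}{\pi^m}\sum_{k=0}^{\infty}\frac{(m+1)_k}{k!}\,K_{A^2(\mathbb{C}^n,p^{k+m})}(z,t)\,\langle w,s\rangle^k ,
\]
with convergence (locally uniform on $D_{n,m}\times D_{n,m}$) and the identification with the genuine Bergman kernel already built into that theorem. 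So the task is reduced to evaluating, for each integer $N=k+m\ge 1$, the weighted Bergman kernel of $A^2(\mathbb{C}^n,p^{N})=A^2(\mathbb{C}^n,e^{-\mu N\|z\|^2})$.

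The second step is this evaluation. The weight $e^{-\mu N\|z\|^2}$ is again a Gaussian weight, with parameter $\mu N>0$, so $A^2(\mathbb{C}^n,e^{-\mu N\|z\|^2})$ is a Fock--Bargmann space. The monomials $\{z^\alpha\}_{\alpha\in\mathbb{N}^n}$ form an orthogonal basis, and a routine computation in polar coordinates (Gamma integrals) gives $\|z^\alpha\|^2_{A^2(\mathbb{C}^n,e^{-\mu N\|z\|^2})}=\pi^n\alpha!/(\mu N)^{n+|\alpha|}$; summing $\sum_{\alpha} z^\alpha\,\overline{t^{\alpha}}/\|z^\alpha\|^2$ and recognizing the exponential series yields
\[
K_{A^2(\mathbb{C}^n,e^{-\mu N\|z\|^2})}(z,t)=\frac{(\mu N)^n}{\pi^n}\,e^{\mu N\langle z,t\rangle}.
\]
(Equivalently, one may quote Bargmann's formula for the Fock kernel with weight parameter $1$ and apply the dilation $z\mapsto z/\sqrt{N}$.) Setting $N=k+m$ gives $K_{A^2(\mathbb{C}^n,p^{k+m})}(z,t)=\frac{\mu^n(k+m)^n}{\pi^n}\,e^{\mu(k+m)\langle z,t\rangle}$.

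Finally, substitute this into the displayed formula coming from Theorem~\ref{Thm.2.1} and collect the constants:
\begin{align*}
K_{D_{n,m}}((z,w),(t,s))
&=\frac{m!}{\pi^m}\sum_{k=0}^{\infty}\frac{(m+1)_k}{k!}\cdot\frac{\mu^n(k+m)^n}{\pi^n}\,e^{\mu(k+m)\langle z,t\rangle}\langle w,s\rangle^k\\
&=\frac{m!\,\mu^n}{\pi^{m+n}}\sum_{k=0}^{\infty}\frac{(m+1)_k(k+m)^n}{k!}\,e^{\mu(k+m)\langle z,t\rangle}\langle w,s\rangle^k ,
\end{align*}
which is the asserted formula. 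The computational core here is entirely routine; the only points that genuinely need care are checking that the hypotheses of Theorem~\ref{Thm.2.1} are met by the weight $p(z)=e^{-\mu\|z\|^2}$ on the noncompact base $\mathbb{C}^n$, and verifying the exact $N$-dependence $\frac{(\mu N)^n}{\pi^n}e^{\mu N\langle z,t\rangle}$ of the Fock--Bargmann kernel, since it is precisely this scaling that produces the factor $(k+m)^n$ in the final series. I do not expect any serious obstacle beyond these bookkeeping checks.
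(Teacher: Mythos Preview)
Your proof is correct and follows exactly the approach the paper indicates (and that Yamamori used): apply Ligocka's Theorem~\ref{Thm.2.1} with $\Omega=\mathbb{C}^n$ and weight $p(z)=e^{-\mu\|z\|^2}$, then plug in the Fock--Bargmann kernel $\frac{(\mu N)^n}{\pi^n}e^{\mu N\langle z,t\rangle}$ for each $N=k+m$. The paper itself does not spell out a proof of this theorem but simply cites Yamamori and names these two ingredients, so there is nothing further to compare.
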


Following the idea of Theorem \ref{Thm.2.1}, we compute the Bergman kernel for the generalized
Fock-Bargmann-Hartogs domain $D_{n_{0}}^{\mathbf{n},\mathbf{p}}$. In order
to compute the Bergman kernel, we first introduce some notation.

Let
$$\alpha=(\alpha_{(1)},\cdots,\alpha_{(l)})\in
(\mathbb{R}_{+})^{n_{1}}\times
\cdots\times(\mathbb{R}_{+})^{n_{l}},$$ where
$\alpha_{(i)}=(\alpha_{i1},\cdots,\alpha_{in_{i}})\in
(\mathbb{R}_{+})^{n_{i}}$ for $1\leq i\leq l.$
For $\alpha\in(\mathbb{R}_{+})^{n}$, we define 
$$\beta(\alpha)=\frac{\prod_{i=1}^l\Gamma(\alpha_{i})}{\Gamma(\vert \alpha
\vert)};$$ 
see D'Angelo \cite{D2}. Here $\Gamma$ is the usual Euler Gamma function.

\begin{Lemma}[D'Angelo \cite{D2}, Lemma 1]\label{Thm.D}
Suppose $\alpha \in (\mathbb{R}_{+})^{n}$. Then we have
$$\int_{B_{+}^{n}}{r^{2\alpha-1}d\textrm{V}(r)}=\frac{\beta(\alpha)}{2^{n}\vert \alpha \vert},$$
$$\int_{{S}_{+}^{n-1}}w^{2\alpha-1}d\sigma(w)=\frac{\beta(\alpha)}{2^{n-1}},$$
where $d\textrm{V}$ is the Euclidean $n$-dimensional volume form,
$d\textrm{S}$ is the Euclidean $(n-1)$-dimensional volume form, and
the subscript $``+"$ denotes that all the variables are positive,
that is,  $B_{+}^{n}= B^{n}\cap (\mathbb{R}_{+})^{n}$ and
$S_{+}^{n-1}= S^{n-1}\cap (\mathbb{R}_{+})^{n}$, in which $B^{n}$ is
the unit ball in $\mathbb{R}^{n}$ and $S^{n-1}$ is the unit sphere
in $\mathbb{R}^{n}$.
\end{Lemma}
\vskip 5pt

\begin{Theorem}\label{Thm2.4}
Suppose
$\alpha=(\alpha_{(1)},\cdots,\alpha_{(l)})\in
(\mathbb{R}_{+})^{n_{1}}\times
\cdots(\mathbb{R}_{+})^{n_{l}},\;\alpha_{(i)}=(\alpha_{i1},\cdots,\alpha_{in_{i}})\in
(\mathbb{R}_{+})^{n_{i}}, 1\leq i\leq l.$ Then we have the
formula:
\begin{equation}\label{eq2.4}
\begin{aligned}
\int_{\sum\limits_{j=1}^{l}{\left\lVert w_{(j)}\right\rVert}^{2p_{j}}<t}w^{\alpha}\bar{w}^{\alpha}d\textrm{V}(w)
=(\pi)^{n_{1}+\cdots+n_{l}}\frac{\prod\limits_{i=1}^{l}\Gamma(\alpha_{i}+1)
\prod\limits_{i=1}^{l}\Gamma(\frac{\vert\alpha_{(i)}\vert+n_{i}}{p_{i}})}
{\prod\limits_{i=1}^{l}p_{i}\prod\limits_{i=1}^{l}\Gamma(\vert \alpha_{i}\vert+n_{i})\Gamma (\sum\limits_{i=1}^{l}\frac{\vert\alpha_{(i)}\vert+n_{i}}{p_{i}}+1)}\cdot t^{\sum\limits_{i=1}^{l}\frac{\vert\alpha_{(i)}
\vert+n_{i}}{p_{i}}}\\
\end{aligned}
\end{equation}
\end{Theorem}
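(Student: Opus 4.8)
The plan is to strip off the rotational symmetry so as to reduce the integral to a product of one-dimensional integrals, and then to evaluate the resulting radial integral by a single power substitution together with D'Angelo's formula (Lemma \ref{Thm.D}). Throughout, the region $\{\sum_{j=1}^{l}\|w_{(j)}\|^{2p_j}<t\}$ is bounded and the integrand $w^{\alpha}\bar w^{\alpha}=\prod_{j,k}|w_{jk}|^{2\alpha_{jk}}$ is nonnegative, so every interchange of integration below is legitimate by Tonelli's theorem, and convergence is automatic since each $\alpha_{jk}>0$.

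Writing $w_{jk}=r_{jk}e^{\mathrm{i}\theta_{jk}}$ for $1\le j\le l$, $1\le k\le n_j$, both the integrand and the constraint $\sum_{j}\big(\sum_{k}r_{jk}^{2}\big)^{p_j}<t$ depend only on the moduli, so the angular integration contributes a factor $(2\pi)^{N}$ with $N:=n_{1}+\cdots+n_{l}$, leaving $\int_{\{r>0,\ \sum_j\|r_{(j)}\|^{2p_j}<t\}}\prod_{j,k}r_{jk}^{2\alpha_{jk}+1}\,dr$, where $r_{(j)}=(r_{j1},\dots,r_{jn_j})$. In each block I then pass to spherical coordinates $r_{(j)}=\rho_j\omega_{(j)}$ with $\rho_j>0$ and $\omega_{(j)}\in S_+^{n_j-1}$ (Jacobian $\rho_j^{n_j-1}$). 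The $\omega_{(j)}$-integral $\int_{S_+^{n_j-1}}\prod_k\omega_{jk}^{2(\alpha_{jk}+1)-1}\,d\sigma$ equals $\beta(\alpha_{(j)}+\mathbf 1_{n_j})/2^{n_j-1}$ by the second identity of Lemma \ref{Thm.D}, where $\alpha_{(j)}+\mathbf 1_{n_j}:=(\alpha_{j1}+1,\dots,\alpha_{jn_j}+1)$; and $\beta(\alpha_{(j)}+\mathbf 1_{n_j})=\Gamma(\alpha_j+1)/\Gamma(|\alpha_j|+n_j)$ in the shorthand of the statement. What remains is the $l$-fold radial integral $J:=\int_{\{\rho_j>0,\ \sum_j\rho_j^{2p_j}<t\}}\prod_j\rho_j^{2|\alpha_{(j)}|+2n_j-1}\,d\rho$.

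To finish I would substitute $u_j=\rho_j^{2p_j}$: this maps the region onto the simplex $\Delta_t=\{u_j>0,\ \sum_j u_j<t\}$, produces a factor $\prod_j(2p_j)^{-1}$, and converts the integrand to $\prod_j u_j^{b_j-1}$ with $b_j:=(|\alpha_{(j)}|+n_j)/p_j$, so that $J=\big(\prod_j(2p_j)^{-1}\big)\int_{\Delta_t}\prod_j u_j^{b_j-1}\,du$. The remaining Dirichlet-type integral $\int_{\Delta_t}\prod_j u_j^{b_j-1}\,du=t^{\sum_j b_j}\prod_j\Gamma(b_j)\big/\Gamma\big(1+\sum_j b_j\big)$ can itself be deduced from the first identity of Lemma \ref{Thm.D} via the rescaling $u_j=tv_j$ followed by $v_j=s_j^{2}$, which carries $\Delta_1$ onto $B_+^{l}$. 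Collecting constants, $(2\pi)^{N}\cdot\prod_j 2^{-(n_j-1)}\cdot\prod_j(2p_j)^{-1}=\pi^{N}\big/\prod_j p_j$, and reassembling all the factors gives precisely the right-hand side of \eqref{eq2.4}.

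I do not anticipate a genuine obstacle: the computation is essentially a bookkeeping exercise. The only points that demand care are tracking the Jacobian factors $\rho_j^{n_j-1}$, verifying that the substitution $u_j=\rho_j^{2p_j}$ simultaneously normalizes the integrand exponent and the defining inequality of the domain, and respecting the conventions $\Gamma(\alpha_i+1):=\prod_k\Gamma(\alpha_{ik}+1)$ and $\Gamma(|\alpha_i|+n_i):=\Gamma(|\alpha_{(i)}|+n_i)$ implicit in the statement.
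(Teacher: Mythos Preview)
Your proposal is correct and follows essentially the same route as the paper: polar coordinates to remove the angles, block-wise spherical coordinates with Lemma~\ref{Thm.D} on each $S_+^{n_j-1}$, and a power substitution in the radii followed by a rescaling and a second application of Lemma~\ref{Thm.D}. The only cosmetic difference is that the paper substitutes $r_i=\rho_i^{p_i}$ (landing directly on a ball of radius $\sqrt{t}$) and then rescales, whereas you substitute $u_j=\rho_j^{2p_j}$ (landing on a simplex) and then pass to the ball via $u_j=t s_j^{2}$; the two compositions coincide.
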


\begin{proof}[Proof]
For the integral
\begin{equation}\label{eq2.5}
\int_{\sum\limits_{j=1}^{l}{\left\lVert w_{(j)}\right\rVert}^{2p_{j}}<t}w^{\alpha}\bar{w}^{\alpha}d\textrm{V}(w),
\end{equation}
by applying the polar coordinates $w=se^{i\theta}$ (namely,
$w_{ij}=s_{ij}e^{i\theta_{ij}},$ $1\leq j\leq n_{i}, \; 1\leq i\leq
l$, $s=(s_{(1)},\cdots,s_{(l)})$), we have
\begin{equation*}
\begin{aligned}
\eqref{eq2.5}=(2\pi)^{n_{1}+\cdots+n_{l}}\int_{\sum\limits_{j=1}^{l}{\left\|s_{(j)}\right\|}^{2p_{j}}<t\atop s_{ji}>0,1\leq i\leq n_{j}, 1\leq j\leq l}s^{2\alpha+1}d\textrm{V}(s).
\end{aligned}
\end{equation*}
Using the spherical coordinates in the variables $s_{(1)},s_{(2)},\cdots,s_{(l)}$ respectively, we get
\begin{equation*}
\begin{aligned}
&\int_{{\sum\limits_{j=1}^{l}{\left\lVert s_{(j)}\right\rVert}^{2p_{j}}<t\atop s_{ji}>0,\;1\leq i\leq n_{j},\;1\leq j\leq l}}
s^{2\alpha+1}d\textrm{V}(s)\\
=&\int_{\sum\limits_{i=1}^{l}{\vert\rho_{i}\vert}^{2p_{i}}<t\atop
\rho_{i}>0,1\leq i\leq l}\rho_{1}^{2\vert\alpha_{(1)}\vert+2n_{1}-1}
\cdots\rho_{l}^{2\vert\alpha_{(l)}\vert+2n_{l}-1}d\rho_{1}d\rho_{2}\cdots d\rho_{l}\\
&\times\int_{S_{+}^{n_{1}-1}}\cdots\int_{S_{+}^{n_{l}-1}}w_{(1)}^{2\alpha_{(1)}+1}\cdots
w_{(l)}^{2\alpha_{(l)}+1}
d\sigma(w_{(1)})\cdots d\sigma(w_{(l)}).\\
\end{aligned}
\end{equation*}
Let ${\rho_{i}}^{p_{i}}=r_{i},1\leq i\leq l$. Then we have
$d\rho_{i}=\frac{1}{p_{i}}\rho_{i}^{1-p_{i}}dr_{i}
=\frac{1}{p_{i}}r_{i}^{\frac{1}{p_{i}}-1}dr_{i}$. Therefore,  Lemma \ref{Thm.D} and  the
above formulas yield
\begin{equation*}
\begin{aligned}
\eqref{eq2.5}
=(2\pi)^{n_{1}+\cdots+n_{l}}\frac{1}{\prod\limits_{i=1}^{l}p_{i}}\frac{
\beta(\alpha_{(1)}+1)}{2^{n_{1}-1}}
\cdots\frac{\beta(\alpha_{(l)}+1)}{2^{n_{l}-1}}\int_{\sum\limits_{i=1}^{l}{\vert r_{i} \vert}^{2}<t \atop
r_{i}>0,1\leq i\leq l}
r_{1}^{\frac{2\vert\alpha_{(1)}\vert+2n_{1}}{p_{1}}-1}
\cdots r_{l}^{\frac{2\vert\alpha_{(l)}\vert+2n_{l}}{p_{l}}-1}dr_{1}\cdots dr_{l}.\\
\end{aligned}
\end{equation*}
Let $r=(r_{1},r_{2},\cdots,r_{l})\in (\mathbb{R}_{+})^{l}$ and
$k:=t^{-\frac{1}{2}}r$. Then $dr=t^{\frac{l}{2}}dk$. After a straightforward computation, we obtain
that \eqref{eq2.5} equals
\begin{equation*}
\begin{aligned}
(2\pi)^{n_{1}+\cdots+n_{l}}\frac{1}{\prod\limits_{i=1}^{l}p_{i}}\frac{
\beta(\alpha_{(1)}+1)}{2^{n_{1}-1}}
\cdots\frac{\beta(\alpha_{(l)}+1)}{2^{n_{l}-1}}\cdot t^{\sum\limits_{i=1}^{l}\frac{\vert\alpha_{(i)}\vert+n_{i}}{p_{i}}}
\int_{B_{+}^{l}}
k_{1}^{\frac{2\vert\alpha_{(1)}\vert+2n_{1}}{p_{1}}-1}
\cdots k_{l}^{\frac{2\vert\alpha_{(l)}\vert+2n_{l}}{p_{l}}-1}dk_{1}\cdots dk_{l}.
\end{aligned}
\end{equation*}
Applying Lemma \ref{Thm.D} to the above formula, we get
\begin{equation}
\begin{aligned}
\eqref{eq2.5}
=&(\pi)^{n_{1}+\cdots+n_{l}}\beta(\alpha_{(1)}+1)\cdots
\beta(\alpha_{(l)}+1)\frac{\beta(\alpha^{'})}{\vert\alpha^{'}\vert\prod\limits_{i=1}^{l}p_{i}}
\cdot t^{\sum\limits_{i=1}^{l}\frac{\vert\alpha_{(i)}
\vert+n_{i}}{p_{i}}}\\
=&(\pi)^{n_{1}+\cdots+n_{l}}\frac{1}{\prod\limits_{i=1}^{l}p_{i}}\frac{\prod\limits_{i=1}^{l}\Gamma(\alpha_{(i)}+1)
\prod\limits_{i=1}^{l}\Gamma(\frac{\vert\alpha_{(i)}\vert+n_{i}}{p_{i}})}
{\prod\limits_{i=1}^{l}\Gamma(\vert \alpha_{i}\vert+n_{i})\Gamma (\sum\limits_{i=1}^{l}\frac{\vert\alpha_{(i)}\vert+n_{i}}{p_{i}}+1)}\cdot t^{\sum\limits_{i=1}^{l}\frac{\vert\alpha_{(i)}
\vert+n_{i}}{p_{i}}},\\
\end{aligned}
\end{equation}
where
$\alpha^{'}=(\frac{\vert\alpha_{(1)}\vert+n_{1}}{p_{1}},\cdots,\frac{\vert\alpha_{(l)}\vert+n_{l}}{p_{l}})
\in (\mathbb{R}_{+})^{l}$.
\end{proof}

Now we consider
 the Hilbert space $A^{2}(D_{n_{0}}^{\mathbf{n},\mathbf{p}}(\mu))$ of square-integrable holomorphic functions on $D_{n_{0}}^{\mathbf{n},\mathbf{p}}(\mu)$.

\begin{Lemma}\label{lemma2.5}Let $f\in A^{2}(D_{n_{0}}^{\mathbf{n},\mathbf{p}}(\mu))$. Then \\
$$f(z,w)=\sum_{\alpha} f_{\alpha}(z)w^{\alpha},$$
where the  series is uniformly convergent on compact subsets of
$D_{n_{0}}^{\mathbf{n},\mathbf{p}}(\mu),$  $f_{\alpha}(z)\in
A^{2}(\mathbb{C}^{n_{0}},e^{-\mu\lambda_{\alpha}{\left\lVert
z\right\rVert}^{2}})$ for any  $\alpha=(\alpha_{(1)},\cdots
,\alpha_{(l)})\in\mathbb{N}^{n_{1}}\times \cdots\times
\mathbb{N}^{n_{l}},\;\alpha_{(i)}=(\alpha_{i1},\cdots,
\alpha_{in_{i}})\in \mathbb{N}^{n_{i}},1\leq i\leq l$,
$\lambda_{\alpha}=\sum\limits_{i=1}^{l}\frac{\vert\alpha_{(i)}\vert+n_{i}}{p_{i}}$,
in which $A^{2}(\mathbb{C}^{n},e^{-\mu\lambda_{\alpha}{\left\lVert
z\right\rVert}^{2} })$ denotes the space of square-integrable
holomorphic functions on $\mathbb{C}^{n}$ with respect to
the measure $e^{-\mu\lambda_{\alpha}{\left\lVert z\right\rVert}^{2}}\textrm{d}V_{2n}$.\\
\end{Lemma}

\begin{proof}[Proof]
Since $D_{n_{0}}^{\mathbf{n},\mathbf{p}}(\mu)$ is a
complete Reinhardt domain, each holomorphic function on
$D_{n_{0}}^{\mathbf{n},\mathbf{p}}(\mu)$ is the sum of a locally uniformly
convergent power series. Thus, for $f\in
A^{2}(D_{n_{0}}^{\mathbf{n},\mathbf{p}}(\mu)),$ we have
$$f(z,w)=\sum_{\alpha} f_{\alpha}(z)w^{\alpha},$$
where the  series is uniformly convergent on compact subsets of
$D_{n_{0}}^{\mathbf{n},\mathbf{p}}(\mu).$  We choose a sequence of
compact subsets $D_{k}\; (1\leq k < \infty)$
 $$D_{k}:=\{(z,w_{(1)},\cdots,w_{(l)})\in
\mathbb{C}^{n_{0}}\times \mathbb{C}^{n_{1}}\times
\cdots\times\mathbb{C}^{n_{l}}
  :\sum\limits_{j=1}^{l}{\left\lVert w_{(j)}\right\rVert}^{2p_{j}}
\leq e^{-\mu{\left\lVert z\right\rVert}^{2}}-\frac{1}{k}\}\cap
\overline{B(0,k)},$$
where ${B(0,k)}$ is the ball in $\mathbb{C}^{n_{0}+n_{1}+\cdots+n_{l}}$
of the radius $k$. Obviously, $D_{k}\Subset D_{k+1}$ and
$\bigcup\limits_{k=1}^{\infty}D_{k}=D_{n_{0}}^{\mathbf{n},\mathbf{p}}(\mu)$.
Since $D_{k}$ is a circular domain, then
$$f_{\alpha}(z)w^{\alpha}\perp f_{\beta}(z)w^{\beta}\qquad
(\alpha\neq\beta)$$ in the Hilbert space $A^{2}(D_{k})$. Hence we
have $${\left\lVert
f\right\rVert}_{L^{2}(D_{k})}^{2}=\sum\limits_{\vert\alpha\vert=0}^{\infty}{\left\lVert
f_{\alpha}(z)w^{\alpha}\right\rVert}_{L^{2}(D_{k})}^{2}.$$ Since
$f(z,w)\in A^{2}(D_{n_{0}}^{\mathbf{n},\mathbf{p}}(\mu))$, we have
$${\left\lVert
f_{\alpha}(z)w^{\alpha}\right\rVert}_{L^{2}(D_{k})}^{2}\leq
{\left\lVert f\right\rVert}_{L^{2}(D_{k})}^{2}\leq {\left\lVert
f\right\rVert}_{L^{2}(D_{n_{0}}^{\mathbf{n},\mathbf{p}}(\mu))}^{2}.$$
Then $f_{\alpha}(z)w^{\alpha}\in A^{2}(D_{n_{0}}^{\mathbf{n},\mathbf{p}}(\mu))$.
Therefore,
$$\int_{D_{n_{0}}^{\mathbf{n},\mathbf{p}}(\mu)}{\vert
f_{\alpha}(z)\vert}^{2}w^{\alpha}\bar{w}^{\alpha}\textrm{d}V<\infty$$
$$\Longrightarrow \int_{\mathbb{C}^{n_{0}}}{\vert f_{\alpha}(z)\vert}^{2}\textrm{d}V(z)\int_{\sum\limits_{j=1}^{l}{\Vert w_{(j)}\Vert}^{2p_{j}}<e^{-\mu{\left\lVert z\right\rVert}^{2}}}w^{\alpha}\bar{w}^{\alpha}\textrm{d}V(w)<\infty.$$
By \eqref{eq2.4}, it follows
 $$\int_{\mathbb{C}^{{n_{0}}}}{\vert f_{\alpha}(z)\vert}^{2}e^{-\mu\lambda_{\alpha}{\left\lVert z\right\rVert}^{2}}\textrm{d}V(z)<\infty.$$
Consequently, $f_{\alpha}(z)\in A^{2}(\mathbb{C}^{n_{0}},e^{-\mu\lambda_{\alpha}{\left\lVert z\right\rVert}^{2}}),$ where $\lambda_{\alpha}=\sum\limits_{i=1}^{l}\frac{\vert\alpha_{(i)}\vert+n_{i}}{p_{i}}$.
\end{proof}

 Lemma \ref{lemma2.5} implies that $f(z)w^{\alpha}$ where $f(z)\in A^{2}(\mathbb{C}^{n_{0}},e^{-\mu\lambda_{\alpha}{\left\lVert z\right\rVert}^{2}})$ form a linearly dense subset of
$A^{2}(D_{n_{0}}^{\mathbf{n},\mathbf{p}}(\mu))$.  Now we can express the Bergman kernel of $D_{n_{0}}^{\mathbf{n},\mathbf{p}}(\mu)$ as follows.

\vskip 5pt

\begin{Theorem}\label{4}The Bergman
kernel of $D_{n_{0}}^{\mathbf{n},\mathbf{p}}(\mu)$ can be expressed by the
following form
\begin{equation}\label{eq2.7}
K_{D_{n_{0}}^{\mathbf{n},\mathbf{p}}(\mu)}[(z,w),(s,t)]=
\sum\limits_{\vert\alpha\vert=0}^{\infty}c_{\alpha}
\frac{\lambda_{\alpha}^{n_{0}}\mu^{n_{0}}}{\pi^{n_{0}}}e^{\lambda_{\alpha}\mu\langle
z,s\rangle}w^{\alpha}\bar{t}^{\alpha},
\end{equation}
where $\alpha=(\alpha_{(1)},\cdots
,\alpha_{(l)})\in\mathbb{N}^{n_{1}}\times \cdots\times
\mathbb{N}^{n_{l}},\;\alpha_{(i)}=(\alpha_{i1},\cdots,
\alpha_{in_{i}})\in \mathbb{N}^{n_{i}},1\leq i\leq l$, and
$$c_{\alpha}=\frac{\prod\limits_{i=1}^{l}p_{i}\prod\limits_{i=1}^{l}\Gamma(\vert \alpha_{i}\vert+n_{i})
\Gamma
(\sum\limits_{i=1}^{l}\frac{\vert\alpha_{(i)}\vert+n_{i}}{p_{i}}+1)}{(\pi)^{n_{1}+\cdots+n_{l}}\prod\limits_{i=1}^{l}\Gamma(\alpha_{(i)}+1)
\prod\limits_{i=1}^{l}\Gamma(\frac{\vert\alpha_{(i)}\vert+n_{i}}{p_{i}})},\;\;\lambda_{\alpha}=\sum\limits_{i=1}^{l}
\frac{\vert\alpha_{(i)}\vert+n_{i}}{p_{i}}.$$
\end{Theorem}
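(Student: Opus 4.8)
The plan is to produce an explicit complete orthonormal system of $A^2(D_{n_{0}}^{\mathbf{n},\mathbf{p}}(\mu))$ adapted to the Reinhardt structure of the domain, and then sum the associated reproducing-kernel series, using Theorem \ref{Thm2.4} to evaluate the normalizing constants and Bargmann's formula to identify the $z$-part. First I would fix a multi-index $\alpha=(\alpha_{(1)},\dots,\alpha_{(l)})\in\mathbb{N}^{n_{1}}\times\cdots\times\mathbb{N}^{n_{l}}$ and observe, since $D_{n_{0}}^{\mathbf{n},\mathbf{p}}(\mu)$ is a complete Reinhardt (hence circular) domain, that the monomials $w^{\alpha}$ with distinct $\alpha$ are mutually orthogonal in $A^2(D_{n_{0}}^{\mathbf{n},\mathbf{p}}(\mu))$; combined with Lemma \ref{lemma2.5}, the functions $f(z)w^{\alpha}$ with $f\in A^2(\mathbb{C}^{n_{0}},e^{-\mu\lambda_{\alpha}\|z\|^2})$ span a dense subspace, and the spans over different $\alpha$ are orthogonal.

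Next I would compute, for such an $f$, the norm of $f(z)w^{\alpha}$ in $A^2(D_{n_{0}}^{\mathbf{n},\mathbf{p}}(\mu))$: by Tonelli's theorem (the integrand is nonnegative) the integral over $D_{n_{0}}^{\mathbf{n},\mathbf{p}}(\mu)$ factors into an iterated integral, and applying Theorem \ref{Thm2.4} with $t=e^{-\mu\|z\|^2}$, so that $t^{\lambda_{\alpha}}=e^{-\mu\lambda_{\alpha}\|z\|^2}$, gives
$$\|f(z)w^{\alpha}\|^2_{A^2(D_{n_{0}}^{\mathbf{n},\mathbf{p}}(\mu))}=\frac{1}{c_{\alpha}}\int_{\mathbb{C}^{n_{0}}}|f(z)|^2 e^{-\mu\lambda_{\alpha}\|z\|^2}\,dV(z)=\frac{1}{c_{\alpha}}\|f\|^2_{A^2(\mathbb{C}^{n_{0}},e^{-\mu\lambda_{\alpha}\|z\|^2})},$$
with $c_{\alpha}$ precisely the constant in the statement (the reciprocal of the $t^{\lambda_{\alpha}}$-coefficient in \eqref{eq2.4}). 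Hence, if $\{\phi_{\alpha,\nu}\}_{\nu}$ is an orthonormal basis of the weighted Fock space $A^2(\mathbb{C}^{n_{0}},e^{-\mu\lambda_{\alpha}\|z\|^2})$, the family $\{\sqrt{c_{\alpha}}\,\phi_{\alpha,\nu}(z)\,w^{\alpha}\}_{\alpha,\nu}$ is a complete orthonormal system in $A^2(D_{n_{0}}^{\mathbf{n},\mathbf{p}}(\mu))$.

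Then I would assemble the Bergman kernel from this basis and collapse the inner sum into a weighted Fock--Bargmann kernel:
$$K_{D_{n_{0}}^{\mathbf{n},\mathbf{p}}(\mu)}[(z,w),(s,t)]=\sum_{|\alpha|=0}^{\infty}c_{\alpha}\,w^{\alpha}\bar t^{\alpha}\sum_{\nu}\phi_{\alpha,\nu}(z)\overline{\phi_{\alpha,\nu}(s)}=\sum_{|\alpha|=0}^{\infty}c_{\alpha}\,w^{\alpha}\bar t^{\alpha}\,K_{A^2(\mathbb{C}^{n_{0}},e^{-\mu\lambda_{\alpha}\|z\|^2})}(z,s).$$
Finally I would substitute the known expression for the reproducing kernel of $A^2(\mathbb{C}^{n_{0}},e^{-\mu\lambda\|z\|^2})$, namely $\frac{(\mu\lambda)^{n_{0}}}{\pi^{n_{0}}}e^{\mu\lambda\langle z,s\rangle}$ --- this is Bargmann's Fock--Bargmann kernel with $\mu$ replaced by $\mu\lambda$, equivalently obtained from the unitary dilation $z\mapsto\sqrt{\lambda}\,z$ --- evaluated at $\lambda=\lambda_{\alpha}$, which turns the displayed series into exactly \eqref{eq2.7}.

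The steps are largely bookkeeping, but a few points deserve care and I would treat them as the core of the argument: justifying the splitting of the $z$- and $w$-integrals (legitimate by nonnegativity), verifying \emph{completeness} rather than mere orthonormality of $\{\sqrt{c_{\alpha}}\phi_{\alpha,\nu}w^{\alpha}\}$ (this is exactly what Lemma \ref{lemma2.5} supplies, since it places every $A^2$ function in the closed span), and confirming the locally uniform convergence of the kernel series, which is automatic for a Bergman kernel expanded in an orthonormal basis. The only genuinely delicate computation is matching constants: checking that the normalizing factor coming out of Theorem \ref{Thm2.4} is exactly $1/c_{\alpha}$ and that the power of $\mu\lambda_{\alpha}$ in the $z$-kernel is $n_{0}$ (not $n_{0}$ plus the dimensions of the $w$-factors).
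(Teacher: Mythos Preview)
Your proposal is correct and follows essentially the same approach as the paper: both rely on Lemma \ref{lemma2.5} for the orthogonal decomposition into $w^{\alpha}$-layers, Theorem \ref{Thm2.4} to produce the constant $c_{\alpha}$, and Bargmann's Fock kernel formula for the $z$-part. The only cosmetic difference is that you construct an explicit orthonormal basis and sum, whereas the paper first writes the kernel in the Reinhardt form $\sum_{\beta} c_{\beta} g_{\beta}(z,s)\,w^{\beta}\bar t^{\beta}$ and then identifies each $g_{\alpha}$ as the weighted Fock--Bargmann kernel by applying the reproducing property to $f(z)w^{\alpha}$.
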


\begin{proof}[Proof] Since $D_{n_{0}}^{\mathbf{n},\mathbf{p}}(\mu)$ is a complete Reinhardt domain,
it follows
$$K_{D_{n_{0}}^{\mathbf{n},\mathbf{p}}(\mu)}[(z,w),(s,t)]=\sum\limits_{\vert\beta\vert=0}^{\infty}
c_{\beta}g_{\beta}(z,s)w^{\beta}\bar{t}^{\beta},$$
where the sum is locally uniformly convergent,
by the invariance of the Bergman
kernel $K_{D_{n_{0}}^{\mathbf{n},\mathbf{p}}(\mu)}$ on $D_{n_{0}}^{\mathbf{n},\mathbf{p}}(\mu)$ under the unitary subgroup action $$(z_1,\cdots, z_{n_0+|\mathbf{n}|}) \rightarrow
(e^{\sqrt{-1}\theta_1}z_1, \cdots, e^{\sqrt{-1}\theta_{n_0+|\mathbf{n}|}}z_{n_0+|\mathbf{n}|}) \; \;(\theta_1,\cdots, \theta_{n_0+|\mathbf{n}|}\in\mathbb{R}).$$

For any
$\alpha=(\alpha_{(1)},\cdots,\alpha_{(l)})\in
\mathbb{N}^{n_{1}}\times \cdots \times
\mathbb{N}^{n_{l}}$ with $\alpha_{(i)}=(\alpha_{i1},\cdots,\alpha_{in_{i}})\in
\mathbb{N}^{n_{i}}\;(1\leq i\leq l)$,  any $f(z)\in
A^{2}(\mathbb{C}^{n_{0}},e^{-\mu\lambda_{\alpha
}{\left\lVert z\right\rVert}^{2}})\; (\lambda_{\alpha}=\sum\limits_{i=1}^{l}\frac{\vert\alpha_{(i)}\vert+n_{i}}{p_{i}})$, we have $f(z)w^{\alpha}\in A^{2} (D_{n_{0}}^{\mathbf{n},\mathbf{p}}(\mu)) $.
Thus
\begin{equation*}
\begin{aligned}
f(z)w^{\alpha}&=\int_{D_{n_{0}}^{\mathbf{n},\mathbf{p}}(\mu)}f(s)t^{\alpha}
K_{D_{n_{0}}^{\mathbf{n},\mathbf{p}}(\mu)}[(z,w),(s,t)]\textrm{d}V\\
&=\int_{\mathbb{C}^{n_{0}}}f(s)\sum\limits_{\beta=0}^{\infty}c_{\beta}g_{\beta}(z,s)w^{\beta}\textrm{d}V(s)
\int_{\sum\limits_{j=1}^{l}{\Vert t_{(j)}\Vert}^{2p_{j}}<e^{-\mu{\left\lVert s\right\rVert}^{2}}}t^{\alpha}\bar{t}^{\beta}\textrm{d}V(t)\\
&=w^{\alpha}\int_{\mathbb{C}^{n_{0}}}f(s)g_{\alpha}(z,s)[e^{-\mu{\left\lVert s\right\rVert}^{2}}]^{\sum\limits_{i=1}^{l}\frac{\vert\alpha_{(i)}\vert+n_{i}}{p_{i}}}d\textrm{V}(s)\;\;(\mathrm{by }\;\eqref{eq2.4}).
\end{aligned}
\end{equation*}
By Bargmann \cite{Ba}, we get that the Bergman kernel of
$A^{2}(\mathbb{C}^{n_{0}},e^{-\mu\lambda_{\alpha }{\left\lVert
z\right\rVert}^{2}})$ can be described by the form
\begin{equation}
K_{\alpha}(z,w)=\frac{\lambda_{\alpha}^{n_{0}}\mu^{n_{0}}}{\pi^{n_{0}}}e^{\lambda_{\alpha}\mu\langle z,w\rangle}.
\end{equation}
Thus we obtain
 $$g_{\alpha}(z,s)=\frac{\lambda_{\alpha}^{n_{0}}\mu^{n_{0}}}{\pi^{n_{0}}}e^{\lambda_{\alpha}\mu\langle z,s\rangle}.$$
This completes the proof.
\end{proof}

The transformation rule for Bergman kernels under proper
holomorphic mapping (e.g., Th. 1 in Bell \cite{Bell82}) is also
valid for unbounded domains (e.g., see Cor. 1 in Trybula
\cite{Trybula}). Note that the coordinate functions play a key role
in the approach of Bell \cite{Bell82} to extend proper holomorphic
mapping, but, in general, are no longer square integrable on
unbounded domains. In order to overcome the difficulty, by combining
the transformation rule for Bergman kernels under proper holomorphic
mapping in Bell \cite{Bell82} and our explicit form  \eqref{eq2.7} of the Bergman
kernel function for $D_{n_{0}}^{\mathbf{n},\mathbf{p}}(\mu)$,
we prove that a proper holomorphic mapping between two equidi-
mensional generalized Fock-Bargmann-Hartogs domains extends holomorphically to their closures as follows.

\begin{Lemma}\label{Thm2.3.2}

Suppose that $f:\; D_{n_{0}}^{\mathbf{n},\mathbf{p}}(\mu)\rightarrow D_{m_{0}}^{\mathbf{m},\mathbf{q}}(\nu)$ is a proper holomorphic mapping
between two equidimensional  generalized Fock-Bargmann-Hartogs
domains. Then  $f$ extends holomorphically to a
neighborhood of the closure $\overline{D_{n_{0}}^{\mathbf{n},\mathbf{p}}(\mu)}$.
\end{Lemma}

In fact,  using the explicit form  \eqref{eq2.7} of the Bergman
kernel function for $D_{n_{0}}^{\mathbf{n},\mathbf{p}}(\mu)$,  we immediately have Lemma \ref{Thm2.3.2} by a slightly modifying the proof of Th. 2.5 in Tu-Wang \cite{T.W}.

\subsection{Cartan's Theorem on the $D_{n_{0}}^{\mathbf{n},\mathbf{p}}$}

Suppose $D$ is a domain in $\mathbb{C}^{N}$ and let $K_{D}(z,w)$ be its Bergman kernel. From Ishi-Kai \cite{IsK}, we know that if the following conditions are satisfied:\\
$(a)\quad K_{D}(0,0)>0$;\\
$(b)\quad T_{D}(0,0)\textrm{ is positive definite},$\\
where $T_{D}$ is an $N\times N$ matrix
\begin{equation*}
T_{D}(z,w):=\left(\begin{array}{ccc}
\frac{\partial^{2}\log K_{D}(z,w)}{\partial z_{1}\partial \overline{w_{1}}}&\cdots&\frac{\partial^{2}\log K_{D}(z,w)}{\partial z_{1}\partial \overline{w_{N}}}\\
\vdots&\ddots\vdots\\
\frac{\partial^{2}\log K_{D}(z,w)}{\partial z_{N}\partial \overline{w_{1}}}&\cdots&\frac{\partial^{2}\log K_{D}(z,w)}{\partial z_{N}\partial \overline{w_{N}}}
\end{array}
\right).
\end{equation*}
Then the Cartan's theorem can also be applied to the case of unbounded circular domains. The above conditions are obviously satisfied by the bounded domain.

Kim-Ninh-Yamamori \cite{Kim} proved the following result.

\vskip 8pt

 \begin{Lemma}[Kim-Ninh-Yamamori \cite{Kim}, Th. 4]\label{Thm3} Suppose that $D$ is a circular domain and its Bergman kernel satisfies the above conditions $(a)$ and $(b)$. If $\varphi \;(\in \mathrm{Aut}(D))$  preserves the origin, then $\varphi$ is a linear mapping.
 \end{Lemma}

Ishi-Kai \cite{IsK} proved the generalization of Lemma \ref{Thm3} as follows.

\begin{Lemma} [Ishi-Kai \cite{IsK}, Prop. 2.1]\label{IsK}
 Let $D_{k}$ be a circular domain (not necessarily bounded)
in $\mathbb{C}^N$ with $0\in D_{k}\; (k=1,2),$  and let
$\varphi:D_1\rightarrow D_2 $ be a biholomorphism with
$\varphi(0)=0$. If $K_{D_k}(0,0)>0$ and $T_{D_k}(0,0)$ is positive
definite $(k=1,2)$, then $\varphi$ is linear.
\end{Lemma}

Therefore, by using the expressions of Bergman kernels of generalized Fock-Bargmann-Hartogs domains, we have the following result.

\begin{Theorem}\label{Thm2.2.1}Suppose that $\varphi:D_{n_{0}}^{\mathbf{n},\mathbf{p}}(\mu)\rightarrow D_{m_{0}}^{\mathbf{m},\mathbf{q}}(\nu)$ be a biholomorphic mapping
between two equidimensional generalized Fock-Bargmann-Hartogs domains with $\varphi(0)=0$. Then $\varphi$ is linear.
\end{Theorem}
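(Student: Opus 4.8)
The plan is to reduce the statement directly to Lemma \ref{IsK} (the Ishi--Kai criterion). The generalized Fock--Bargmann--Hartogs domains $D_{n_{0}}^{\mathbf{n},\mathbf{p}}(\mu)$ are complete Reinhardt domains containing the origin, so in particular they are circular domains with $0$ in their interior; thus the only thing to verify is that their Bergman kernels satisfy conditions $(a)$ and $(b)$, i.e. $K_{D}(0,0)>0$ and $T_{D}(0,0)$ positive definite. Once this is done for both $D_{n_{0}}^{\mathbf{n},\mathbf{p}}(\mu)$ and $D_{m_{0}}^{\mathbf{m},\mathbf{q}}(\nu)$, Lemma \ref{IsK} applied to $\varphi$ immediately yields that $\varphi$ is linear.

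The computation uses the explicit series \eqref{eq2.7} from Theorem \ref{4}. First I would check $(a)$: evaluating \eqref{eq2.7} at $z=w=s=t=0$ kills every term with $|\alpha|>0$ (because of the factor $w^{\alpha}\bar t^{\alpha}$), leaving only the $\alpha=0$ term, which equals $c_{0}\lambda_{0}^{n_{0}}\mu^{n_{0}}/\pi^{n_{0}}$. Since all the Gamma-factors and $p_i$ appearing in $c_{0}$ and in $\lambda_{0}=\sum_{i=1}^{l} n_i/p_i$ are strictly positive, $K_{D}(0,0)>0$. For $(b)$ I would split the variables into the base variables $z\in\mathbb{C}^{n_0}$ and the fiber variables $w=(w_{(1)},\dots,w_{(l)})$. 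Writing $L(z,w,s,t)=\log K_{D}[(z,w),(s,t)]$, one differentiates the series. The mixed second derivatives $\partial^2 L/\partial z_a\partial\bar s_b$ at the origin come only from the $\alpha=0$ term of $\log$ of the kernel: the dominant term is $\log\!\big(c_{0}\lambda_{0}^{n_0}\mu^{n_0}/\pi^{n_0}\cdot e^{\lambda_{0}\mu\langle z,s\rangle}\big)$, whose Hessian in $(z,\bar s)$ is $\lambda_{0}\mu\, I_{n_0}$, a positive multiple of the identity. The mixed derivatives $\partial^2 L/\partial z_a\partial \bar t_{jk}$ and $\partial^2 L/\partial w_{jk}\partial\bar s_b$ vanish at $0$ by the Reinhardt symmetry (each monomial is $w^{\alpha}\bar t^{\alpha}$, so a single $\partial_{w_{jk}}$ or $\partial_{\bar t_{jk}}$ evaluated at $w=t=0$ gives zero unless paired). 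Finally the fiber block $\partial^2 L/\partial w_{jk}\partial\bar t_{j'k'}$ at the origin: only the unit multi-indices $\alpha=e_{jk}$ contribute, and a short computation (using $K = c_0\lambda_0^{n_0}\mu^{n_0}/\pi^{n_0} + \sum_{|\alpha|=1}(\cdots)w^{\alpha}\bar t^{\alpha} + O(|w|^2|t|^2)$, hence $\log K = \mathrm{const} + \frac{1}{c_0\lambda_0^{n_0}\mu^{n_0}/\pi^{n_0}}\sum_{|\alpha|=1}(\cdots)w^{\alpha}\bar t^{\alpha}+\cdots$) shows this block is diagonal with entries $c_{e_{jk}}\lambda_{e_{jk}}^{n_0}\mu^{n_0}\pi^{-n_0}\big/\big(c_{0}\lambda_{0}^{n_0}\mu^{n_0}\pi^{-n_0}\big)$, each strictly positive. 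Hence $T_{D}(0,0)$ is block-diagonal with two positive-definite blocks, so it is positive definite.

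The main obstacle is purely bookkeeping rather than conceptual: one must organize the Taylor expansion of $\log K_{D}$ at the origin carefully enough to see that $T_{D}(0,0)$ is block-diagonal (base block versus fiber block) and that each block is positive definite, handling the mixed base--fiber entries by the Reinhardt vanishing argument. There is no analytic subtlety, since Lemma \ref{IsK} has already been stated to cover unbounded circular domains, and the series \eqref{eq2.7} converges locally uniformly near $0$ so term-by-term differentiation is legitimate. I would present the argument as: (1) recall $D_{n_{0}}^{\mathbf{n},\mathbf{p}}(\mu)$ is a circular domain with $0$ in its interior; (2) verify $(a)$ from the $\alpha=0$ term of \eqref{eq2.7}; (3) verify $(b)$ by the block-diagonal computation above; (4) invoke Lemma \ref{IsK}.
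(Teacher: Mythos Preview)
Your proposal is correct and follows exactly the route the paper takes: the paper's proof consists of the single sentence that, using the explicit kernel \eqref{eq2.7}, ``a straightforward computation'' shows conditions $(a)$ and $(b)$ hold, after which Lemma~\ref{IsK} is invoked. You have simply written out that straightforward computation (the block-diagonal structure of $T_D(0,0)$ and the positivity of each block), so there is nothing to add or compare.
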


\begin{proof}[Proof]
By using the expressions \eqref{eq2.7} of Bergman kernels of generalized Fock-Bargmann-Hartogs domains and a straightforward computation, we
show that the Bergman kernel of every generalized Fock-Bargmann-Hartogs domain satisfies the above conditions (a) and (b). So we get Th. \ref{Thm2.2.1} by Lemma \ref{IsK}.
\end{proof}

\section{Proof Of The Main Theorem}

To begin, we exploit the boundary structure of $D_{n_{0}}^{\mathbf{n},\mathbf{p}}(\mu)$ which is comprised of
$$bD_{n_{0}}^{\mathbf{n},\mathbf{p}}(\mu)=b_{0}D_{n_{0}}^{\mathbf{n},\mathbf{p}}(\mu)\cup  b_{1}D_{n_{0}}^{\mathbf{n},\mathbf{p}}(\mu)\cup b_{2}D_{n_{0}}^{\mathbf{n},\mathbf{p}}(\mu),$$
where
\begin{equation*}
\begin{aligned}
&b_{0}D_{n_{0}}^{\mathbf{n},\mathbf{p}}(\mu)\\
&:=\{(z,w_{(1)},\cdots,w_{(l)})\in \mathbb{C}^{n_{0}}\times \cdots\times\mathbb{C}^{n_{l}}
  :\sum\limits_{j=1}^{l}{\left\lVert w_{(j)}\right\rVert}^{2p_{j}}=e^{-\mu{\left\lVert z\right\rVert}^{2}}
  ,\;{\left\lVert w_{(j)}\right\rVert}^{2}\neq 0,\;    1+\epsilon \leq j\leq l\};\\
\end{aligned}
\end{equation*}
\begin{equation*}
\begin{aligned}
&b_{1}D_{n_{0}}^{\mathbf{n},\mathbf{p}}(\mu)\\
&:=\bigcup\limits_{j=1+\epsilon}^{l}\{(z,w_{(1)},\cdots,w_{(l)})\in \mathbb{C}^{n_{0}}\times \cdots\times\mathbb{C}^{n_{l}}
  :\sum\limits_{j=1}^{l}{\left\lVert w_{(j)}\right\rVert}^{2p_{j}}=e^{-\mu{\left\lVert z\right\rVert}^{2}}
  ,\;{\left\lVert w_{(j)}\right\rVert}^{2}=0,\;p_{j}> 1\};\\
\end{aligned}
\end{equation*}
\begin{equation*}
\begin{aligned}
&b_{2}D_{n_{0}}^{\mathbf{n},\mathbf{p}}(\mu)\\
&:=\bigcup\limits_{j=1+\epsilon}^{l}\{(z,w_{(1)},\cdots,w_{(l)})\in \mathbb{C}^{n_{0}}\times \cdots\times\mathbb{C}^{n_{l}}
  :\sum\limits_{j=1}^{l}{\left\lVert w_{(j)}\right\rVert}^{2p_{j}}=e^{-\mu{\left\lVert z\right\rVert}^{2}}
  ,\;{\left\lVert w_{(j)}\right\rVert}^{2}=0,\;p_{j}< 1\}.\\
\end{aligned}
\end{equation*}

Now we give the following proposition.

\begin{Proposition}\label{pro.2.4.1}$(1)$ The boundary $b_{0}D_{n_{0}}^{\mathbf{n},\mathbf{p}}(\mu)$ is a real analytic hypersurface in $\mathbb{C}^{n_{0}+n_{1}+\cdots+n_{l}}$ and $D_{n_{0}}^{\mathbf{n},\mathbf{p}}(\mu)$ is strongly pseudoconvex at all points of $b_{0}D_{n_{0}}^{\mathbf{n},\mathbf{p}}(\mu)$.\\
$(2)$  $D_{n_{0}}^{\mathbf{n},\mathbf{p}}(\mu)$ is weakly pseudoconvex but not strongly pseudoconvex  at any point of $b_{1}D_{n_{0}}^{\mathbf{n},\mathbf{p}}(\mu)$ and is not smooth at any point of $b_{2}D_{n_{0}}^{\mathbf{n},\mathbf{p}}(\mu)$.
\end{Proposition}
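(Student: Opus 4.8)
The plan is to work with two defining functions for $D:=D_{n_{0}}^{\mathbf{n},\mathbf{p}}(\mu)$: the ``polynomial'' one $\rho_0=\sum_{j=1}^{l}\|w_{(j)}\|^{2p_j}-e^{-\mu\|z\|^{2}}$, and, on the open set $\{\,\sum_j\|w_{(j)}\|^{2p_j}>0\,\}$ (which contains $b_0D\cup b_1D$), the ``logarithmic'' one $\widetilde\rho=\mu\|z\|^{2}+\log\big(\sum_{j=1}^{l}\|w_{(j)}\|^{2p_j}\big)$, for which $D=\{\widetilde\rho<0\}$ locally. For the first half of $(1)$ I would argue that $(\|w_{(j)}\|^{2})^{p_j}$ is real analytic wherever $\|w_{(j)}\|^{2}>0$ (the polynomial $\|w_{(j)}\|^{2}$ composed with the real-analytic function $t\mapsto t^{p_j}$ on $\mathbb{R}_{+}$), and is a polynomial when $p_j=1$; by the very definition of $b_0D$ this makes $\rho_0$ real analytic near each point of $b_0D$. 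Since $\partial\rho_0/\partial z_k=\mu\bar z_k e^{-\mu\|z\|^{2}}$ and $\partial\rho_0/\partial w_{jk}=p_j\|w_{(j)}\|^{2p_j-2}\bar w_{jk}$ cannot all vanish on $b_0D$ (if $z\neq0$ a $z$-derivative survives, and $z=0$ forces $\sum_j\|w_{(j)}\|^{2p_j}=1$ and hence some $w_{(j)}\neq0$), we get $d\rho_0\neq0$ there and $b_0D$ is a real-analytic hypersurface.

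For the strong pseudoconvexity statement I would compute the complex Hessian of $\widetilde\rho$. Writing $u_j=\|w_{(j)}\|^{2}$, $S=\sum_j u_j^{p_j}$ and $b_j=\langle a^{w_j},w_{(j)}\rangle$ for a holomorphic vector $a=(a^{z},a^{w_1},\dots,a^{w_l})$, the Hessian is block diagonal with $z$-block $\mu I$, and
$$\mathcal L_{\widetilde\rho}(a)=\mu\|a^{z}\|^{2}+\sum_j\frac{p_ju_j^{p_j-1}}{S}\|a^{w_j}\|^{2}+\sum_j\frac{p_j(p_j-1)u_j^{p_j-2}}{S}|b_j|^{2}-\frac1{S^{2}}\Big|\sum_j p_ju_j^{p_j-1}b_j\Big|^{2}.$$
The elementary point is that, using the Cauchy--Schwarz bound $|b_j|^{2}\le u_j\|a^{w_j}\|^{2}$ and separating the cases $p_j<1$, $p_j=1$, $p_j>1$, one gets for each $j$ the inequality $\frac{p_ju_j^{p_j-1}}{S}\|a^{w_j}\|^{2}+\frac{p_j(p_j-1)u_j^{p_j-2}}{S}|b_j|^{2}\ge\frac{p_j^{2}u_j^{p_j-2}}{S}|b_j|^{2}$, whereas the weighted Cauchy--Schwarz inequality gives $\big|\sum_j p_ju_j^{p_j-1}b_j\big|^{2}\le S\sum_j p_j^{2}u_j^{p_j-2}|b_j|^{2}$. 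Summing over $j$ yields $\mathcal L_{\widetilde\rho}(a)\ge\mu\|a^{z}\|^{2}\ge0$ \emph{everywhere}; in particular $\widetilde\rho$ is plurisubharmonic, so $D$ is pseudoconvex. On $b_0D$ all $u_j>0$ when $p_j\neq1$, so if $\mathcal L_{\widetilde\rho}(a)=0$ for a complex-tangential $a$, then $a^{z}=0$ and every Cauchy--Schwarz above is an equality, which forces $a^{w_j}=\lambda_jw_{(j)}$ with $p_j\lambda_j$ independent of $j$; inserting $a^z=0$ into the tangency relation $\mu\langle a^{z},z\rangle+S^{-1}\sum_j p_ju_j^{p_j-1}b_j=0$ gives $(\textrm{const})\,S=0$, hence $a=0$. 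Thus the Levi form of $\widetilde\rho$ is positive definite on the complex tangent space at every point of $b_0D$, i.e. $D$ is strongly pseudoconvex there.

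For $(2)$, consider first $p\in b_1D$, so some $w_{(j_0)}=0$ at $p$ with $p_{j_0}>1$. Then $\|w_{(j_0)}\|^{2p_{j_0}}$ is $C^{2}$ near $0$, so $bD$ is $C^{2}$ at $p$, and by pseudoconvexity its Levi form is $\ge 0$ there. The complex-tangential vector $a$ with $a^{w_j}=0$ for $j\neq j_0$ and $a^{w_{j_0}}\neq0$ lies in the kernel: the whole $j_0$-block of the Hessian of $\widetilde\rho$ vanishes at $p$ because $u_{j_0}^{p_{j_0}-1}=0$, so $\mathcal L_{\widetilde\rho}(a)=0$. Hence $D$ is weakly, but not strongly, pseudoconvex at $p$. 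Now let $p\in b_2D$, so some $w_{(j_0)}=0$ at $p$ with $p_{j_0}<1$; I would show $bD$ is not of class $C^{2}$ near $p$. Picking a real coordinate $x$ (one of $\operatorname{Re}z_1,\operatorname{Im}z_1,\operatorname{Re}w_{jk},\operatorname{Im}w_{jk}$) with $\partial\rho_0/\partial x(p)\neq0$, freezing every coordinate except $x$ and $w_{(j_0)}$, and solving $\sum_j\|w_{(j)}\|^{2p_j}=e^{-\mu\|z\|^{2}}$ for $x$, the corresponding slice of $bD$ is exhibited as the graph $x=\psi_0(w_{(j_0)})=\Phi\big(\|w_{(j_0)}\|^{2p_{j_0}}\big)$ with $\Phi$ real analytic near $0$ and $\Phi'(0)\neq0$; Taylor expanding $\Phi$, $\psi_0=\textrm{const}-c\,\|w_{(j_0)}\|^{2p_{j_0}}+R$ with $c\neq0$ and $R$ of class $C^{2}$. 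Since $\|w_{(j_0)}\|^{2p_{j_0}}$ fails to be $C^{2}$ at the origin when $p_{j_0}<1$ (its real Hessian involves $\|w_{(j_0)}\|^{2p_{j_0}-2}\to\infty$), $\psi_0$ is not $C^{2}$, hence neither is that slice, hence neither is $bD$ near $p$. (When $p_{j_0}\le\tfrac12$ the same computation shows in addition that the unit normal along $bD$ has no limit at $p$, so $bD$ is not even $C^{1}$ there.)

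The two Hessian computations and the per-$j$ inequality are routine. I expect the real obstacle to be the $b_2D$ case: when $\tfrac12<p_{j_0}<1$ the boundary actually is $C^{1}$ at $p$, so ``not smooth'' must be read as ``not $C^{2}$'', and one has to make sure --- through the slice reduction --- that the $\|w_{(j_0)}\|^{2p_{j_0}}$ singularity cannot be absorbed into any admissible defining function; one should also keep in mind that $b_0D$, $b_1D$, $b_2D$ may overlap, a point lying in $b_2D$ always being governed by the $b_2D$ argument above.
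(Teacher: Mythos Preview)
Your proof is correct and follows the same overall strategy as the paper—compute the Levi form and invoke Cauchy--Schwarz twice (once within each $w_{(j)}$-block, once across blocks)—but with a genuinely cleaner organization. The paper works with $\rho=\sum_j\|w_{(j)}\|^{2p_j}-e^{-\mu\|z\|^{2}}$, whose $z$-block in the Hessian contains the negative piece $-\mu^{2}e^{-\mu\|z\|^{2}}|\bar z\cdot\zeta|^{2}$; they must substitute the tangency relation (their \eqref{eq2.15}) to absorb it before the two Cauchy--Schwarz steps yield $L_\rho(T,T)\ge\mu e^{-\mu\|z\|^{2}}\|\zeta\|^{2}$. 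By passing to $\widetilde\rho=\mu\|z\|^{2}+\log S$ you make the $z$-block equal to $\mu I$, so the lower bound $\mathcal L_{\widetilde\rho}(a)\ge\mu\|a^{z}\|^{2}$ holds for \emph{all} vectors $a$ (not just tangential ones), showing in one stroke that $\widetilde\rho$ is plurisubharmonic and postponing the tangency condition to the equality analysis; your per-$j$ inequality and weighted Cauchy--Schwarz are exactly the paper's two Cauchy--Schwarz steps, just decoupled from the boundary. For part~(2) you go well beyond the paper, which dismisses $b_{2}D$ with the single sentence ``It is obvious that $D$ is not smooth at any point of $b_{2}D$''; your slice/graph argument actually proves this and correctly pins down ``not smooth'' as ``not $C^{2}$'' when $\tfrac12<p_{j_0}<1$. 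One small point to tidy: on $b_0D$ the case $p_1=1$, $u_1=0$ needs the convention $u_1^{-1}|b_1|^{2}:=0$ (automatic since $b_1=0$), or equivalently keep the surviving $\tfrac1S\|a^{w_1}\|^{2}$ term aside—it is nonnegative and must vanish at equality, forcing $a^{w_1}=0$ directly.
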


\begin{proof}[Proof]
Let
$$\rho(z,w_{(1)},\cdots,w_{(l)}):=\sum\limits_{j=1}^{l}{\left\lVert w_{(j)}\right\rVert}^{2p_{j}}-e^{-\mu{\left\lVert z\right\rVert}^{2}}.$$
Then $\rho$ is a real analytic definition function of $b_{0}D_{n_{0}}^{\mathbf{n},\mathbf{p}}(\mu)$. Fix a point $(z_{0},w_{(1)0},\cdots,w_{(l)0})\in b_{0}D_{n_{0}}^{\mathbf{n},\mathbf{p}}(\mu)$ and let
$T=(\zeta,\eta_{(1)},\cdots,\eta_{(l)})\in T_{(z_{0},w_{(1)0},\cdots,w_{(l)0})}^{1,0}(b_{0}D_{n_{0}}^{\mathbf{n},\mathbf{p}}(\mu)).$
Then by definition, we know that
\begin{equation}\label{eq2.14}
w_{(j)0}\neq 0,\;\; j=1+\epsilon,\cdots,l;
\end{equation}
\begin{equation}\label{eq2.15}
\sum\limits_{k=1}^{l}p_{k}{\left\lVert w_{(k)0}\right\rVert}^{2(p_{k}-1)}\overline{w_{(k)0}}\cdot\eta_{(k)}+\mu e^{-\mu{\left\lVert z_{0}\right\rVert}^{2}}\overline{z_{0}}\cdot\zeta=0;
\end{equation}
\begin{equation}\label{eq2.16}
\sum\limits_{j=1}^{l}{\left\lVert w_{(j)0}\right\rVert}^{2p_{j}}-e^{-\mu{\left\lVert z_{0}\right\rVert}^{2}}=0.
\end{equation}
Thanks to \eqref{eq2.14}, \eqref{eq2.15} and \eqref{eq2.16}, the Levi form of $\rho$ at the point $(z_{{0}},w_{(1)0},\cdots,w_{(l)0})$ can be computed as follows:
\begin{equation*}
\begin{aligned}
&L_{\rho}(T,T)\\
&:=\sum\limits_{i,j=1}^{n_{0}+n_{1}+\cdots+n_{l}}\frac{\partial^2\rho}{\partial T_{i}\partial
\overline{T_{j}}}(z_{0},w_{(1)0},\cdots,w_{(l)0})T_{i}\overline{T_{j}}\\
&=\sum\limits_{k=1}^{l}p_{k}(p_{k}-1){\Vert w_{(k)0}\Vert}^{2(p_{k}-2)}{\vert\overline{w_{(k)0}}\cdot\eta_{(k)}\vert}^{2}
+\sum\limits_{k=1}^{l}p_{k}{\Vert w_{(k)0}\Vert}^{2(p_{k}-1)}{\Vert\eta_{(k)}\Vert}^{2}\\
&\;\;\;+\mu e^{-\mu{\left\lVert z_{0}\right\rVert}^{2}}{\Vert \zeta\Vert}^{2}-\mu^{2}
e^{-\mu{\left\lVert z_{0}\right\rVert}^{2}}{\vert\overline{z_{0}}\cdot\zeta\vert}^{2}\\
\end{aligned}
\end{equation*}
\begin{equation*}
\begin{aligned}
&=\sum\limits_{k=1}^{l}{p_{k}}^{2}{\Vert w_{(k)0}\Vert}^{2(p_{k}-2)}{\vert\overline{w_{(k)0}}\cdot\eta_{(k)}\vert}^{2}
+\mu e^{-\mu{\left\lVert z_{0}\right\rVert}^{2}}{\Vert \zeta\Vert}^{2}-\mu^{2}e^{-\mu{\left\lVert z_{0}\right\rVert}^{2}}{\vert\overline{z_{0}}\cdot\zeta\vert}^{2}\\
&\;\;\;+\sum\limits_{k=1}^{l}p_{k}{\Vert w_{(k)0}\Vert}^{2(p_{k}-2)}({\Vert w_{(k)0}\Vert}^{2}{\Vert\eta_{(k)}\Vert}^{2}-{\vert\overline{w_{(k)0}}\cdot\eta_{(k)}\vert}^{2})\\
\end{aligned}
\end{equation*}
\begin{equation*}
\begin{aligned}
&=(\sum\limits_{k=1}^{l}{\Vert w_{(k)0}\Vert}^{2p_{k}})^{-1}\bigg(\sum\limits_{k=1}^{l}{p_{k}}^{2}{\Vert w_{(k)0}\Vert}^{2(p_{k}-2)}{\vert\overline{w_{(k)0}}\cdot\eta_{(k)}\vert}^{2}\bigg)\bigg(\sum\limits_{k=1}^{l}{\Vert w_{(k)0}\Vert}^{2p_{k}}\bigg)\\
&\;\;\;-(\sum\limits_{k=1}^{l}{\Vert w_{(k)0}\Vert}^{2p_{k}})^{-1}{\bigg|\sum\limits_{k=1}^{l}{p_{k}}{\Vert w_{(k)0}\Vert}^{2(p_{k}-1)}\overline{w_{(k)0}}\cdot\eta_{(k)}\bigg|}^{2}\\
\end{aligned}
\end{equation*}
\begin{equation*}
\begin{aligned}
&\;\;\;+\sum\limits_{k=1}^{l}p_{k}{\Vert w_{(k)0}\Vert}^{2(p_{k}-2)}\bigg({\Vert w_{(k)0}\Vert}^{2}{\Vert\eta_{(k)}\Vert}^{2}-{\vert\overline{w_{(k)0}}\cdot\eta_{(k)}\vert}^{2}\bigg)+\mu e^{-\mu{\left\lVert z_{0}\right\rVert}^{2}}{\Vert \zeta\Vert}^{2}\\
\end{aligned}
\end{equation*}
\begin{equation*}
\begin{aligned}
&=(\sum\limits_{k=1}^{l}{\Vert w_{(k)0}\Vert}^{2p_{k}})^{-1}\bigg[\bigg(\sum\limits_{k=1}^{l}{p_{k}}^{2}{\Vert w_{(k)0}\Vert}^{2(p_{k}-2)}{\vert\overline{w_{(k)0}}\cdot\eta_{(k)}\vert}^{2}\bigg)\bigg(\sum\limits_{k=1}^{l}{\Vert w_{(k)0}\Vert}^{2p_{k}}\bigg)\\
&\;\;\;-{\bigg|\sum\limits_{k=1}^{l}{p_{k}}{\Vert w_{(k)0}\Vert}^{2(p_{k}-1)}\overline{w_{(k)0}}\cdot\eta_{(k)}\bigg|}^{2}\bigg]+
\mu e^{-\mu{\left\lVert z_{0}\right\rVert}^{2}}{\Vert \zeta\Vert}^{2}\\
&\;\;\;+\sum\limits_{k=1}^{l}p_{k}{\Vert w_{(k)0}\Vert}^{2(p_{k}-2)}\bigg({\Vert w_{(k)0}\Vert}^{2}{\Vert\eta_{(k)}\Vert}^{2}-{\vert\overline{w_{(k)0}}\cdot\eta_{(k)}\vert}^{2}\bigg)\geq \mu e^{-\mu{\left\lVert z_{0}\right\rVert}^{2}}{\Vert \zeta\Vert}^{2}\geq 0
\end{aligned}.
\end{equation*}
by the Cauchy-Schwarz inequality, for all $T=(\zeta,\eta_{(1)},\cdots,\eta_{(l)})\in T_{(z_{0},w_{(1)0},\cdots,w_{(l)0})}^{1,0}(b_{0}D_{n_{0}}^{\mathbf{n},\mathbf{p}}(\mu))$. Obviously, if $\zeta\neq0$, then $L_{\rho}(T,T)>0$.

On the other hand, combining with \eqref{eq2.14}, \eqref{eq2.15} and \eqref{eq2.16}, we know that the equality holds if and only if
\begin{equation}
\zeta=0,
\end{equation}
\begin{equation}
{\Vert w_{(k)0}\Vert}^{2}{\Vert\eta_{(k)}\Vert}^{2}-{\vert\overline{w_{(k)0}}\cdot\eta_{(k)}\vert}^{2}=0,
\end{equation}
\begin{equation}\label{eq2.17}
\begin{aligned}
&\bigg[\bigg(\sum\limits_{k=1}^{l}{p_{k}}^{2}{\Vert w_{(k)0}\Vert}^{2(p_{k}-2)}{\vert\overline{w_{(k)0}}\cdot\eta_{(k)}\vert}^{2}\bigg)\bigg(\sum\limits_{k=1}^{l}{\Vert w_{(k)0}\Vert}^{2p_{k}}\bigg)\\
&-{\bigg|\sum\limits_{k=1}^{l}{p_{k}}{\Vert w_{(k)0}\Vert}^{2(p_{k}-1)}\overline{w_{(k)0}}\cdot\eta_{(k)}\bigg|}^{2}\bigg]=0.
\end{aligned}
\end{equation}

Suppose $\zeta=0$, then $T=(\zeta,\eta_{(1)},\cdots,\eta_{(l)})\neq0$ implies that there exists $\eta_{i_{0}}\neq0$. If $L_{\rho}(T,T)=0$  for all $T\neq0\in T_{(z_{0},w_{(1)0},\cdots,w_{(l)0})}^{1,0}(b_{0}D_{n_{0}}^{\mathbf{n},\mathbf{p}}(\mu))$, then by \eqref{eq2.14}, \eqref{eq2.15}, \eqref{eq2.16} and \eqref{eq2.17}, we have $\eta_{k}=0$ ($1\leq k \leq l$). This is a contradiction.

When there exists $j_{0}\geq 1+\epsilon$ such that ${\left\lVert w_{(j_{0})0}\right\rVert}^{2}=0$ and $p_{j_{0}}> 1$, then $(z_{0},w_{(1)0},\cdots,w_{(l)0})\in b_{1}D_{n_{0}}^{\mathbf{n},\mathbf{p}}(\mu)$. Let $T_{0}=(0,\cdots,\eta_{(j_{0})},0,\cdots,0),\;\Vert\eta_{(j_{0})}\Vert\neq 0$. Then $L_{\rho}(T_{0},T_{0})=0$. Hence $D_{n_{0}}^{\mathbf{n},\mathbf{p}}(\mu)$ is weakly pseudoconvex but not strongly pseudoconvex  on any point of $b_{1}D_{n_{0}}^{\mathbf{n},\mathbf{p}}(\mu)$.

It is obvious that $D_{n_{0}}^{\mathbf{n},\mathbf{p}}(\mu)$ is not smooth at any point of $b_{2}D_{n_{0}}^{\mathbf{n},\mathbf{p}}(\mu)$. The proof is completed. \end{proof}

\begin{Lemma}[Tu-Wang \cite{T-W.Math Ann}]\label{Thm1.3}
  Let $\Sigma(\mathbf{n};\mathbf{p})$ and $\Sigma(\mathbf{m};\mathbf{q})$ be two equidimensional generalized pseudoellipsoids, $\mathbf{n},\mathbf{m}\in \mathbb{N}^{l}$, $\mathbf{p},\mathbf{q}\in \mathbb{(R_{+})}^{l}$ (where $p_{k},\;q_{k}\neq1\;for\;2\leq k\leq l$). Let
  $h:\Sigma(\mathbf{n};\mathbf{p})\rightarrow \Sigma(\mathbf{m};\mathbf{q})$ be a biholomorphic linear isomorphism between
  $\Sigma(\mathbf{n};\mathbf{p})$ and $\Sigma(\mathbf{m};\mathbf{q})$. Then there exists a permutation $\sigma\in S_r$ such that
  $n_{\sigma(i)}=m_i,\;p_{\sigma(i)}=q_i$ and
  \begin{eqnarray*}
  \begin{aligned}
  h(\zeta_{1},\cdots,\zeta_{r})=(\zeta_{\sigma(1)},\cdots,\zeta_{\sigma(r)})
  \begin{bmatrix}
  U_1 &     &       &    \\
    & U_2 &       &    \\
    &     &\ddots &    \\
    &     &       & U_r\\
  \end{bmatrix}
  \end{aligned},
  \end{eqnarray*}
  where $U_i$ is a unitary transformation of $\mathbb{C}^{m_i}(m_i=n_{\sigma(i)})$ for $1\leq i\leq r$.
  \end{Lemma}

Define
$$V_{1}:=\{(z,w_{(1)},\cdots,w_{(l)}) \in \mathbb{C}^{n_{0}}\times \mathbb{C}^{n_{1}}\times\cdots\times\mathbb{C}^{n_{l}}:  \;
w_{(1)}=0,\cdots,w_{(l)}=0 \}\;(\cong \mathbb{C}^{n_{0}}),$$
$$V_{2}:=\{(z,w_{(1)},\cdots,w_{(l)})\in\mathbb{C}^{m_{0}}\times \mathbb{C}^{m_{1}}
\times\cdots\times\mathbb{C}^{m_{l}}:\; w_{(1)}=0,\cdots,w_{(l)}=0\}\; (\cong \mathbb{C}^{m_{0}}).$$
Then we have the following lemma.

\begin{Lemma}\label{Lemma 3.1} Suppose $D_{n_{0}}^{\mathbf{n},\mathbf{p}}(\mu)$
and $D_{m_{0}}^{\mathbf{m},\mathbf{q}}(\nu)$
are two equidimensional generalized Fock-Bargmann-Hartogs domains, $f\;:D_{n_{0}}^{\mathbf{n},\mathbf{p}}(\mu)\rightarrow D_{m_{0}}^{\mathbf{m},\mathbf{q}}(\nu)$
 is a biholomorphic mapping. Then we have $f(V_{1})\subseteq V_{2}$ and $f|_{V_{1}}\;:V_{1}\rightarrow V_{2}$
 is biholomorphic. Consequently  $n_{0}=m_{0}$.
\end{Lemma}
\begin{proof}[Proof] Let
$f(z,0)=(f_{1}(z),f_{2}(z))$, then we get
$\sum\limits_{i=1}^{l}{\Vert
f_{2i}\Vert}^{2q_{i}}<e^{-\nu{\left\lVert
f_{1}(z)\right\rVert}^{2}}\leq 1.$
Then we obtain that the bounded entire mapping
$f_{2i}(z)$ on $\mathbb{C}^{n_{0}}$ is constant  $(1\leq i\leq l)$ by Liouville's Theorem. Since
$f(z)$ is biholomorphic,
$f_{1}(z)$ is an unbounded function. Hence there exist
$\{z_{k}\}$ such that
$f_{1}(z_{k})\rightarrow\infty$ as $k\rightarrow\infty$. It implies $f_{2}(z)\equiv0$. This proves $f(V_{1})\subseteq V_{2}$.
Similarly, by making the same argument for $f^{-1}$,
we have $f^{-1}(V_{2})\subseteq V_{1}$. Namely, $f|_{V_{1}}\;:V_{1}\rightarrow V_{2}$ is biholomorphic.
Hence $n_{0}=m_{0}$.
\end{proof}

Now we give the proof of Theorem \ref{Thm1}.

\begin{proof}[The proof of Theorem \ref{Thm1}]
Let $f(0,0)=(a,b)$ (thus $b=0$ by Lemma \ref{Lemma 3.1}) and define
$$\phi(z,w_{(1)},\cdots,w_{(l)}):=(z-a,w_{(1)}(e^{2\nu{<z,a>}-\nu{\left\lVert
a\right\rVert}^{2}})^{\frac{1}{2q_{1}}},\cdots,w_{(l)}(e^{2\nu{<z,a>}-\nu{\left\lVert a\right\rVert}^{2}})^
{\frac{1}{2q_{l}}}).$$
Obviously, $\phi\in\mathrm{Aut}(D_{m_{0}}^{\mathbf{m},\mathbf{q}}(\nu))$ and $\phi\circ f(0,0)=(0,0)$.
Then $\phi\circ f$ is linear by  Theorem \ref{Thm2.2.1}. We describe $\phi\circ f$ as follows:
\begin{equation*}
\phi\circ f(z,w)=(z,w)
\left(\begin{array}{cc}
A&B\\
C&D\\
\end{array}\right)=(zA+wC,zB+wD).
\end{equation*}
According to Lemma \ref{Lemma 3.1}, we have $f(z,0)=(f_{1}(z),0)$. Thus $B=0$. Since $g:=\phi\circ f
$ is biholomorphic,  $A$ and $D$ are invertible matrices. We write $g(z,w)$ as follows:
\begin{equation*}
g(z,w)=(z,w)
\left(\begin{array}{cc}
A&0\\
C&D\\
\end{array}\right)=(z,w_{(1)},\cdots,w_{(l)})
\left(\begin{array}{ccccc}
A&0&\cdots&0\\
C_{11}&D_{11}&\cdots&D_{1l}\\
\vdots&\vdots&\ddots&\cdots\\
C_{l1}&D_{l1}&\cdots&D_{ll}\\
\end{array}\right),
\end{equation*}
which implies that
\begin{equation*}
g^{-1}(z,w)=(z,w)
\left(\begin{array}{cc}
A^{-1}&0\\
-D^{-1}CA^{-1}&D^{-1}\\
\end{array}\right)=(z,w_{(1)},\cdots,w_{(l)})\left(\begin{array}{ccccc}
A^{-1}&0&\cdots&0\\
E_{11}&G_{11}&\cdots&G_{1l}\\
\vdots&\vdots&\ddots&\vdots\\
E_{l1}&G_{l1}&\cdots&G_{ll}\\
\end{array}\right).
\end{equation*}
Set $\Sigma(\mathbf{n};\mathbf{p})=\{(w_{(1)},\cdots,w_{(l)})\in\mathbb{C}^{n_{1}}\times\cdots
\times\mathbb{C}^{n_{l}}:\sum\limits_{j=1}^{l}{\Vert w_{(j)}\Vert}^{2p_{j}}<1\}.$ Then, if $\sum\limits_{j=1}^{l}{\Vert w_{(j)}\Vert}^{2p_{j}}<e^{-\mu{\left\lVert 0\right\rVert}^{2}}=1$, we obtain
$$\sum\limits_{j=1}^{l}{\Vert w_{(1)}D_{1j}+\cdots+w_{(l)}D_{lj}\Vert}^{2q_{j}}<e^{-\nu{\left\lVert
wC\right\rVert}^{2}}<1$$ and
if $\sum\limits_{j=1}^{l}{\Vert w_{(j)}\Vert}^{2q_{j}}<e^{-\nu{\left\lVert 0\right\rVert}^{2}}=1$,
 we have $$\sum\limits_{j=1}^{l}{\Vert w_{(1)}G_{1j}+\cdots+w_{(l)}G_{lj}\Vert}^{2p_{j}}
 <e^{-\mu{\left\lVert w(-D^{-1}CA^{-1})\right\rVert}^{2}}<1.$$\\
Therefore, we conclude that the mapping
$g_{2}(w):\Sigma(\mathbf{n};\mathbf{p})
\rightarrow \Sigma(\mathbf{m};\mathbf{q})$ given by
\begin{equation*}
g_{2}(w_{(1)},\cdots,w_{(l)})=wD=(w_{(1)},\cdots,w_{(l)})\left(\begin{array}{ccc}
D_{11}&\cdots&D_{1l}\\
\vdots&\ddots&\vdots\\
D_{l1}&\cdots&D_{ll}\\
\end{array}\right)
\end{equation*}
is a biholomorphic linear mapping. By Lemma \ref{Thm1.3}, $g_{2}$ can be expressed in the form:
\begin{equation*}
g_{2}(w_{(1)},\cdots,w_{(l)})=(w_{(\sigma(1))},\cdots,w_{(\sigma(l))})\left(\begin{array}{cccc}
\Gamma_{1}&\null&\null&\null\\
\null&\Gamma_{2}&\null&\null\\
\null&\null&\ddots&\null\\
\null&\null&\null&\Gamma_{l}\\
\end{array}\right),
\end{equation*}
where $\sigma \in S_{l}$ is a permutation with
$n_{\sigma(j)}=m_{j}$, $p_{\sigma(j)}=q_{j} \; (j=1,\cdots,l)$ and  $\Gamma_{i}\in \mathcal{U}(m_{i})\;(1\leq i\leq l)$.
Hence $g$ can be rewritten as follows:
\begin{equation*}
g(z,w)=(z,w)
\left(\begin{array}{cc}
A&0\\
C&D\\
\end{array}\right)=(z,w_{(\sigma(1))},\cdots,w_{(\sigma(l))})
\left(\begin{array}{ccccc}
A&\null&\null&\null\\
C_{\sigma(1)1}&\Gamma_{1}&\null&\null\\
C_{\sigma(2)1}&\null&\Gamma_{2}&\null\\
\vdots&\null&\null&\ddots&\null\\
C_{\sigma(l)1}&\null&\null&\null&\Gamma_{l}\\
\end{array}\right).
\end{equation*}

Next we prove that $C=0.$  The linearity of $g$ yields that
$g(bD_{n_{0}}^{\mathbf{n},\mathbf{p}})=bD_{m_{0}}^{\mathbf{m},\mathbf{q}}.$
Let
$(0,w)=(0,0,\cdots,w_{(j)},0,\cdots,0)\in bD_{n_{0}}^{\mathbf{n},\mathbf{p}}$,
namely, ${\Vert w_{(j)}\Vert}^{2}=(e^{-\mu{\left\lVert
0\right\rVert}^{2}})^{\frac{1}{p_{j}}}=1$. As $\Gamma_{j}$ $(1\leq j\leq l)$ are unitary matrices, moreover,
 assuming $\sigma(i_{0})=j$, we have
$${\Vert w_{(j)}\Vert }^{2p_{j}}={\Vert w_{(\sigma(i_{0}))}\Gamma_{i_{0}}\Vert}^{2q_{i_{0}}}=e^{-\nu{\left\lVert
w_{(\sigma(i_{0}))}C_{\sigma(i_{0})1}\right\rVert}^{2}}=1.$$
This implies
$w_{(j)}C_{j1}=0$ for all ${\Vert w_{(j)}\Vert}^{2}=1$. So $C_{j1}=0\; (1\leq j\leq l).$
Thus we have
\begin{equation*}
g(z,w_{(1)},\cdots,w_{(l)})=(z,w_{(\sigma(1))},\cdots,w_{(\sigma(l))})
\left(\begin{array}{ccccc}
A&\null&\null&\null&\null\\
\null&\Gamma_{1}&\null&\null\null\\
\null&\null&\Gamma_{2}\null&\null\\
\null&\null&\null&\ddots&\null\\
\null&\null&\null&\null&\Gamma_{l}\\
\end{array}\right).
\end{equation*}

Lastly, we show $\sqrt{\frac{\nu}{\mu}}A\in \mathcal{U}(n)$ $(n:=n_0=m_0)$. For $z\in\mathbb{C}^{n_{0}}$, take $(w_{(1)},\cdots,w_{(l)})$ such that
$e^{-\mu{\left\lVert z\right\rVert}^{2}}=\sum\limits_{j=1}^{l}{\Vert w_{(j)}\Vert}^{2p_{j}}.$
By $g(bD_{n_{0}}^{\mathbf{n},\mathbf{p}})=bD_{m_{0}}^{\mathbf{m},\mathbf{q}}$, we have $\sum\limits_{j=1}^{l}{\Vert w_{(\sigma(j))}\Gamma_{j}\Vert}^{2q_{j}}=e^{-\mu{\left\lVert zA\right\rVert}^{2}}.$
Since $ \Gamma_{j}(j=1,\cdots,l)$ are unitary matrices,
we get $$e^{-\mu{\left\lVert z\right\rVert}^{2}}=
\sum\limits_{j=1}^{l}{\Vert w_{(\sigma(j))}\Vert}^{2p_{\sigma(j)}}=\sum\limits_{j=1}^{l}
{\Vert w_{(\sigma(j))}\Gamma_{j}\Vert}^{2q_{j}}=e^{-\nu{\left\lVert zA\right\rVert}^{2}}.$$
Therefore, $\nu\Vert zA\Vert^{2}=\mu{\left\lVert z\right\rVert}^{2}$ $(z\in \mathbb{C}^{n})$. Then we get $\sqrt{\frac{\nu}{\mu}}A\in \mathcal{U}(n)$, and the proof is completed.
\end{proof}

\begin{proof}[The proof of Corollary \ref{Coro.1.5}]
In fact, the significance of the above $\phi$ is just to ensure $\phi\circ f(0)=0$. Then the proof of  Theorem \ref{Thm1} implies that Corollary \ref{Coro.1.5} is obvious.
\end{proof}

\begin{proof}[The proof of the Theorem \ref{Thm.1.6}] Obviously, $\varphi_{A}, \varphi_{D}
\textrm{ and }\varphi_{a}$ are biholomorphic self-mappings of
$D_{n_{0}}^{\mathbf{n},\mathbf{p}}(\mu)$.
On the other hand, for $\varphi\in \mathrm{Aut}(D_{n_{0}}^{\mathbf{n},\mathbf{p}}(\mu))$, we
assume $\varphi(0,0)=(a,b)$ (then $b=0$ by Lemma \ref{Lemma 3.1}). Hence $\varphi_{-a}\circ\varphi$ preserves the origin. Then by Corollary \ref{Coro.1.5}, we obtain $\varphi_{-a}\circ\varphi=\varphi_{D}\circ\varphi_{A}$
for some $\varphi_{A}, \varphi_{D}$.
 Hence $\varphi=\varphi_{a}\circ\varphi_{D}\circ\varphi_{A}$, and the proof is complete.
\end{proof}

\begin{proof}[The proof of Theorem \ref{Thm1.2}]
Let $f$ be a proper holomorphic mapping between two equidimensional generalized Fock-Bargmann-Hartogs domains $D_{n_{0}}^{\mathbf{n},\mathbf{p}}(\mu)$ and $D_{m_{0}}^{\mathbf{m},\mathbf{q}}(\nu)$. Then by Th. \ref{Thm2.3.2},
 $f$  extends holomorphically to a neighborhood $\Omega$ of $\overline{D_{n_{0}}^{\mathbf{n},\mathbf{p}}(\mu)}$ with
 $$f({bD_{n_{0}}^{\mathbf{n},\mathbf{p}}(\mu)})\subset bD_{m_{0}}^{\mathbf{m},\mathbf{q}}(\nu).$$
Then by Proposition \ref{pro.2.4.1} and Lemma 1.3 in Pin\v{c}uk \cite{Pin}, we have
\begin{equation}\label{eq3.15}
f(M\cap b_{0}D_{n_{0}}^{\mathbf{n},\mathbf{p}}(\mu))\subset b_{1}D_{m_{0}}^{\mathbf{m},\mathbf{q}}(\nu)\cup b_{2}D_{m_{0}}^{\mathbf{m},\mathbf{q}}(\nu)
\end{equation}
 where
$M:=\big\{z\in \Omega,\mathrm{det}(\frac{\partial f_{i}}{\partial z_{j}})=0\big\}$  is the zero locus of the complex Jacobian of the holomorphic mapping $f$  on $\Omega$.

If $M\cap bD_{n_{0}}^{\mathbf{n},\mathbf{p}}(\mu)\neq \emptyset$, then, from min $\{n_{1+\epsilon},n_{2},\cdots,n_{l}\}\geq2$, we have $M\cap b_{0}D_{n_{0}}^{\mathbf{n},\mathbf{p}}(\mu)\neq \emptyset$. Take an irreducible component $M'$ of $M$ with $M'\cap b_{0}D_{n_{0}}^{\mathbf{n},\mathbf{p}}(\mu)\neq \emptyset$. Then the intersection $E_{M'}$ of $M'$ with $b_{0}D_{n_{0}}^{\mathbf{n},\mathbf{p}}(\mu)$ is a real
analytic submanifold of dimensional $2(n_{0}+n_{1}+\cdots+n_{l})-3$ on a dense, open subset of $E_{M'}$. By \eqref{eq3.15}, we have $f(E_{M'})\subset b_{1}D_{m_{0}}^{\mathbf{m},\mathbf{q}}(\nu)\cup b_{2}D_{m_{0}}^{\mathbf{m},\mathbf{q}}(\nu)$. Hence
\begin{equation}\label{eq3.16}
f(M'\cap D_{n_{0}}^{\mathbf{n},\mathbf{p}}(\mu))\subset \bigcup\limits_{j=1+\delta}^{l}Pr_{i}(D_{m_{0}}^{\mathbf{m},\mathbf{q}}(\nu)),
\end{equation}
where $Pr_{i}(D_{m_{0}}^{\mathbf{m},\mathbf{q}}(\nu)):=\{(z,w_{(1)},\cdots,w_{(l)})\in D_{m_{0}}^{\mathbf{m},\mathbf{q}}(\nu),\;\Vert w_{(i)}\Vert=0\}$ ($1+\delta\leq i\leq l$), by the uniqueness theorem. Since $\mathrm{codim}M'=1$, $\mathrm{codim}[\bigcup\limits_{j=1+\delta}^{l}Pr_{i}(D_{m_{0}}^{\mathbf{m},\mathbf{q}}(\nu))]\geq \textrm{min}\{m_{1+\delta}
,\cdots,m_{l},m_{1}+\cdots+m_{l}\}\geq2$ and $f: D_{n_{0}}^{\mathbf{n},\mathbf{p}}(\mu)\rightarrow D_{m_{0}}^{\mathbf{m},\mathbf{q}}(\nu)$ is proper, this is contradiction with \eqref{eq3.16}. Thus we have $M\cap bD_{n_{0}}^{\mathbf{n},\mathbf{p}}(\mu)=\emptyset$.

Let $S:=M\cap D_{n_{0}}^{\mathbf{n},\mathbf{p}}(\mu)$. Hence we have
$$S\subset D_{n_{0}}^{\mathbf{n},\mathbf{p}}(\mu),\;\;\overline{S}\cap bD_{n_{0}}^{\mathbf{n},\mathbf{p}}(\mu)=\emptyset.$$

If $S\neq\emptyset$, then $S$ is a complex analytic set in $\mathbb{C}^{n_{0}+ n_{1}+\cdots+
 n_{l}}$ also. For any $(z,w)\in S$, we have
${\vert w_{ln_{l}}}\vert^{2p_{l}}\leq \sum\limits_{j=1}^{l}{\Vert w_{(j)}\Vert}^{2p_{j}}\leq e^{-\mu{\left\lVert z\right\rVert}^{2}}
\leq 1.$
Thus
\begin{equation}\label{equ19}
{\vert w_{ln_{l}}}\vert^{2}\leq 1\leq 1+\Vert(z,w')\Vert,
\end{equation}
where $w=(w',w_{ln_{l}})$. Then $S$ is an algebraic set of $\mathbb{C}^{n_{0}+n_{1}+\cdots+
 n_{l}}$ by \S 7.4 Th. 3 of Chirka \cite{Chirka}.

Suppose $S_{1}$ is an irreducible component of $S$. Let $\overline{S_{1}}$ be the closure of $S_{1}$ in $ \mathbb{P}^{n_{0}+ n_{1}+\cdots+
 n_{l}}$. Then by \S 7.2 Prop. 2 of Chirka \cite{Chirka}, $\overline{S_{1}}$ is a projective algebraic set and $\dim\overline{S_{1}}=n_{0}+n_{1}+\cdots+
n_{l}-1$.
Let $[\xi,z,w]$ be the homogeneous coordinate in $\mathbb{P}^{n_{0}+ n_{1}+\cdots+
 n_{l}}$, we embed $\mathbb{C}^{n_{0}+ n_{1}+\cdots+
 n_{l}}$ into $\mathbb{P}^{n_{0}+ n_{1}+\cdots+
n_{l}}$ as the affine piece $U_{0}=\{[\xi,z,w]\in \mathbb{P}^{n_{0}+ n_{1}+\cdots+
n_{l}},\;\xi\neq 0\}$ by $(z,w)\hookrightarrow [1,z,w]$. Then we have $$D_{n_{0}}^{\mathbf{n},\mathbf{p}}(\mu)\cap U_{0}=\bigg\{[\xi,z,w],\;\xi\neq0,\;\sum\limits_{j=1}^{l}\frac{{\Vert w_{(j)}\Vert}^{2p_{j}}}{{\vert \xi\vert}^{2p_{j}}}<e^{-\mu\frac{{\Vert z \Vert}^{2}}{{\vert \xi\vert}^{2}}}\bigg\}.$$

Let $H=\{\xi=0\}\subset\mathbb{P}^{n_{0}+ n_{1}+\cdots+
 n_{l}}$. Consider another affine piece $U_{1}=\{[\xi,z,w]\in \mathbb{P}^{n_{0}+n_{1}+\cdots+
 n_{l}},\;z_{1}\neq 0\}$ with affine coordinate $(\zeta,t,s)=(\zeta,t_{2},\cdots,t_{n_{0}},s_{(1)},\cdots,s_{(l)})$. Let $t'=(1,t_{2},\cdots,t_{n_{0}})$.
 Since $\frac{{\Vert w_{(j)}\Vert}^{2p_{j}}}{{\vert\xi\vert}^{2p_{j}}}=\frac{{\Vert w_{(j)}\Vert}^{2p_{j}}}{{\vert z_{1}\vert}^{2p_{j}}}\frac{{\vert z_{1}\vert}^{2p_{j}}}{{\vert\xi\vert}^{2p_{j}}}=\frac{{\Vert s_{(j)}\Vert}^{2p_{j}}}{{\vert\zeta\vert}^{2p_{j}}}$ and $e^{-\mu\frac{{\Vert z \Vert}^{2}}{{\vert \xi\vert}^{2}}}=e^{-\mu\frac{{\Vert z \Vert}^{2}}{{\vert z_{1}\vert}^{2}}\frac{{\vert z_{1}\vert}^{2}}{{\vert\xi\vert}^{2}}}=e^{-\mu\frac{1+{\vert t_{2}\vert}^{2}+\cdots+{\vert t_{n_{0}}\vert}^{2}}{{\vert \zeta\vert}^{2}}}$, we obtain
\begin{equation}\label{eq3.18}
\begin{aligned}
&D_{n_{0}}^{\mathbf{n},\mathbf{p}}(\mu)\cap U_{0}\cap U_{1}\\
=&\bigg\{(\zeta,t_{2},\cdots,t_{n_{0}},s_{(1)},\cdots,s_{(l)})\in \mathbb{C}^{n_{0}+n_{1}+\cdots+
 n_{l}},\;\sum\limits_{j=1}^{l}\frac{{\Vert s_{(j)}\Vert}^{2p_{j}}}{{\vert \zeta\vert}^{2p_{j}}}<e^{-\mu\frac{{\Vert t' \Vert}^{2}}{{\vert \zeta\vert}^{2}}}\bigg\}.\\
\end{aligned}
\end{equation}
Let $S'=\overline{S_{1}}\cap U_{1}$ and $H_{1}=H\cap U_{1}=\{\zeta=0\}$ (note $\xi=\frac{\zeta}{z_{1}}$). For every $u\in S'\cap H_{1}$, there exists a sequence of points $\{u_{k}\}\subset \overline{S_{1}}\cap ((U_{0}\cap U_{1})\backslash H_{1})$ such that
$u_{k}\rightarrow u\;(k\rightarrow\infty)$, The formula \eqref{eq3.18} implies
\begin{equation}\label{eq3.19}
{\Vert s_{(j)}(u_{k})\Vert}^{2p_{j}}\leq {\vert \zeta(u_{k})\vert}^{2p_{j}}e^{-\mu\frac{{\Vert t' \Vert}^{2}}{{\vert \zeta(u_{k})\vert}^{2}}},\;1\leq j\leq l.
\end{equation}
Since $u\in H_{1}$, that means $\zeta(u)=0$ and $\zeta{(u_{k})}\rightarrow 0\;(k\rightarrow\infty)$. Therefore we have ${\Vert s_{(j)}(u)\Vert}^{2p_{j}}\leq0\;(1\leq j\leq l)$ as $k\rightarrow\infty$. Hence
\begin{equation}
S'\cap H_{1}\subset \bigg\{\zeta=0,\;s_{(1)}=\cdots=s_{(l)}=0\bigg\}.$$
 Then $\dim(S'\cap H_{1})\leq n_{0}-1$. Shafarevich \cite{Shafa} \S6.2 Th. 6 implies
$$n_{0}-1\geq
 \dim(S'\cap H_{1})\geq \dim S'+\dim H_{1}-n_{0}-n_{1}-\cdots-
 n_{l}\geq \dim S'-1.
\end{equation}
This means $\dim S'\leq n_{0}$, and thus $n_{0}+ n_{1}+\cdots+
 n_{l}-1=\dim S'\leq n_{0}$. Therefore, we get $ n_{1}+\cdots+
n_{l}\leq 1$, a contradiction with assumption min $ \{n_{1+\epsilon},n_{2},\cdots,n_{l},n_{1}+\cdots+
n_{l}\}\geq 2$.

Therefore, $S=\emptyset$ and thus $f$ is unbranched. Since the generalized Fock-Bargmann-Hartogs domain is simply connected,  $f:D_{n_{0}}^{\mathbf{n},\mathbf{p}}(\mu)\rightarrow D_{m_{0}}^{\mathbf{m},\mathbf{q}}(\nu)$ is a biholomorphism. The proof is completed. \end{proof}

 \noindent\textbf{Acknowledgments}
We sincerely thank the referees, who read the paper very carefully and gave many useful suggestions. E. Bi was supported by the Natural Science Foundation of Shandong Province, China (No.ZR2018BA015), and Z. Tu was supported by
the National Natural Science Foundation of China (No.11671306).

\addcontentsline{toc}{section}{References}
\phantomsection
\renewcommand\refname{References}
\small{

\end{document}